\documentclass[12pt,reqno]{amsart}
\usepackage[margin=1in]{geometry}

\newcommand\EatDot[1]{}

\usepackage{amsfonts,amsmath,amsthm,amssymb} 
\usepackage{mathrsfs} 
\usepackage{stmaryrd} 
\usepackage{mathtools}

\usepackage{natbib}
\usepackage{url}

\usepackage{color} 

\usepackage{multirow}

\allowdisplaybreaks[1]

\newtheorem{theorem}{Theorem}
\newtheorem{proposition}[theorem]{Proposition}
\newtheorem{lemma}[theorem]{Lemma}

\newtheorem*{question}{Q}

\theoremstyle{remark}
\newtheorem{remark}{Remark}

\theoremstyle{definition}
\newtheorem{definition}{Definition}
\newtheorem{assumption}{Assumption}

\usepackage[justification=centering]{caption}



\newcommand{\bseta}{\boldsymbol{\bseta}}

\newcommand{\bomega}{\boldsymbol{\bomega}}


\newcommand{\bbS}{\mathbb{S}}


\newcommand{\calB}{\mathcal{B}}
\newcommand{\calC}{\mathcal{C}}

\newcommand{\calN}{\mathcal{N}}
\newcommand{\calO}{\mathcal{O}}



\newcommand{\N}{\mathbb{N}}

\newcommand{\R}{\mathbb{R}}

\newcommand{\E}{\mathbb{E}}
\renewcommand{\Pr}{\mathbb{P}}
\newcommand{\Cov}{\textnormal{Cov}}
\newcommand{\iid}{\textnormal{i.i.d.}}


\newcommand{\diff}{\mathrm{d}}

\renewcommand{\epsilon}{\varepsilon}

\DeclareMathOperator*{\argmin}{arg\,min}

\newcommand{\ctr}{\textnormal{ctr}}

\newcommand{\tx}{\tilde{x}}
\newcommand{\tX}{\tilde{X}}
\usepackage[UKenglish]{babel}

\title[Covariance estimation via ULA]{On sample complexity for covariance estimation via the unadjusted Langevin algorithm}
\author[S Nakakita]{Shogo Nakakita}
\address{Komaba Institute for Science, University of Tokyo, 3-8-1, Komaba, Meguro, Tokyo 153-8902, Japan}
\email{nakakita@g.ecc.u-tokyo.ac.jp}
\begin{document}

\begin{abstract}
    We establish sample complexity guarantees for estimating the covariance matrix of a strongly log-concave smooth distribution using the unadjusted Langevin algorithm (ULA).
    We quantitatively compare our complexity estimates on single-chain ULA with embarrassingly parallel ULA and derive that the sample complexity of the single-chain approach is smaller than that of embarrassingly parallel ULA by a logarithmic factor in the dimension and the reciprocal of the prescribed precision, with the difference arising from effective bias reduction through burn-in.
    The key technical contribution is a concentration bound for the sample covariance matrix around its expectation, derived via a log-Sobolev inequality for the joint distribution of ULA iterates.
\end{abstract}

\maketitle

\section{Introduction}\label{sec:intro}
Sampling from a Gibbs distribution $\pi(\diff x)\propto e^{-f(x)}\diff x$ on $\R^{d}$ is a fundamental problem in statistics, physics, and machine learning.
Let $f\in\calC^{2}(\R^{d};\R)$ be an $\alpha$-convex and $\beta$-smooth potential function, i.e., $\alpha I_{d}\preceq\nabla^{2}f(x)\preceq\beta I_{d}$ for all $x\in\R^{d}$.
Markov Chain Monte Carlo (MCMC) is one of the successful strategies for this problem, and the unadjusted Langevin algorithm (ULA) is a popular MCMC method due to its simplicity and efficiency.

Let us introduce the definition of ULA (with fixed step size). Given (i) an initial value $X_{0}$ (a $d$-dimensional random vector, possibly $X_{0}\sim \delta_{x}$ for some $x\in\R^{d}$), (ii) step size $\eta>0$, (iii) the number of burn-in steps $m\ge0$, and (iv) sample size $n\ge1$, the algorithm is defined for all $i=1,\ldots,m+n$ as
\begin{equation}\label{eq:def:ula}
    X_{i}=X_{i-1}-\eta\nabla f(X_{i-1})+\sqrt{2\eta}Z_{i},
\end{equation}
where $\{Z_{i}\}_{i\in\N}\sim^{\iid} \calN_{d}(0,I_{d})$ independent of $X_{0}$.
While its invariant distribution $\pi_{\eta}$ (which exists for sufficiently small $\eta$) differs from the target distribution $\pi$, it is known that $\pi_{\eta}$ converges to $\pi$ in some metrics or divergences as $\eta\to0$.

ULA is derived by the Euler--Maruyama discretization of the Langevin dynamics (also known as the overdamped Langevin diffusion) defined as the solution of the following stochastic differential equation:
\begin{equation*}
    \diff Y_{t}=-\nabla f(Y_{t})\diff t+\sqrt{2}\diff B_{t},\ Y_{0}=X_{0},
\end{equation*}
where $B_{t}$ is the standard $d$-dimensional Brownian motion.
We can interpret that ULA approximates the continuous-time process $Y_{t}$ at discrete time points $t=i\eta$ for $i=0,\ldots,m+n$.
The Langevin dynamics has $\pi$ as its invariant distribution and has an explicit rate of convergence to $\pi$ if $\pi$ satisfies functional inequalities such as Poincar\'{e} inequalities and log-Sobolev inequalities \citep{bakry2014analysis}.
Owing to this property, ULA is expected to approximate $\pi$ well if $\eta\ll1$ and $m\eta\gg1$.
In fact, the community has unveiled the behaviour of the distribution of $X_{n}$ for each $n$ \citep[e.g.,][]{dalalyan2017theoretical,durmus2017nonasymptotic,durmus2019high,vempala2019rapid} and found favourable non-asymptotic convergence guarantees to the target distribution $\pi$ in various metrics such as the 2-Wasserstein distance and Kullback--Leibler divergence in this decade.

\subsection{Problem setting}\label{sec:intro:setting}
Our main interest lies in the sample complexity analysis for moment estimation via ULA.
In statistical inference, we are often interested in moments of target distributions, such as means and covariance matrices, and the sample complexities required for their estimation. 
We expect that the sample mean $\bar{X}_{n}$ and sample covariance matrix $\hat{\Sigma}_{n}$ defined as
\begin{equation}\label{eq:def:samplemoments}
    \bar{X}_{n}=\frac{1}{n}\sum_{i=m+1}^{m+n}X_{i},\quad \hat{\Sigma}_{n}=\frac{1}{n}\sum_{i=m+1}^{m+n}X_{i}X_{i}^{\top}-\bar{X}_{n}\bar{X}_{n}^{\top}
\end{equation}
of ULA should approximate the population mean $\E_{\pi}[X]=\int x\pi(\diff x)$ and population covariance $\Cov(\pi)=\int xx^{\top}\pi(\diff x)-\E_{\pi}[X]\E_{\pi}[X]^{\top}$ when $\eta\ll1$ and $n\eta\gg1$.
While this idea is asymptotically guaranteed by standard ergodic theorems under fixed $d$, it is not obvious when $d$ is large.
Specifically, it is unclear how large $n$ should be to achieve a certain accuracy in high-dimensional settings due to the dependence among the ULA outputs and the resulting large variance.

One may consider the embarrassingly parallel ULA to address dependence, which is the main comparison target of this paper.
In this study, we refer to the embarrassingly parallel ULA as the procedure that collects the last iterates of $N$ independent paths of ULA $\{X_{m+1}^{k}\}_{k=1}^{N}$ defined as follows: for all $k=1,\ldots,N$ and $i=1,\ldots,m+1$ (with $m$ burn-in steps discarded in estimation), with $d$-dimensional random vectors $X_{0}^{k}$,
\begin{equation}\label{eq:def:ep}
    X_{i}^{k}=X_{i-1}^{k}-\eta\nabla f(X_{i-1}^{k})+\sqrt{2\eta}Z_{i}^{k},
\end{equation}
where $\{Z_{i}\}_{i,k}\sim^{\iid}\calN_{d}(0,I_{d})$.
While it requires burn-in for every path to remove \emph{bias}, the \emph{variance} of its sample moments is smaller than that of single-chain ULA with $N=n$.
If the sample complexities of both methods coincide up to constant factors, embarrassingly parallel ULA is preferable in practice due to its wall-clock speedup via parallelization.
Hence, one should raise the following question:
\begin{question}
    Does the bias removal of single-chain ULA by burn-in outweigh its large variance from dependence in comparison to embarrassingly parallel ULA in terms of sample complexity? If so, how large is the difference?
\end{question}

To answer this question, we study sample complexity guarantees for covariance estimation of strongly log-concave smooth distributions via single-chain ULA and embarrassingly parallel ULA respectively.
In particular, this study establishes a concentration bound for the sample covariance matrix of single-chain ULA, which is our main theoretical contribution.
Some studies have investigated non-asymptotic guarantees for moment estimation via MCMC \citep{durmus2019high,kook2024covariance,shen2025high}, but this topic remains largely unexplored in the literature.
Recently, \citet{kook2024covariance} studied covariance matrix estimation with high-accuracy MCMC (that is, MCMC whose invariant distribution is exactly the target distribution; for example, random-walk Metropolis and Metropolis-adjusted Langevin algorithm) for $\pi$ satisfying a Poincar\'{e} inequality.
However, their comparison focuses on query complexities rather than sample complexities for covariance estimation.
In contrast, this work examines sample complexity through a concentration analysis of the sample covariance matrix and establishes explicit bounds for both single-chain and parallel ULA approaches.

We address the problem via a bias--variance decomposition and analysing the variance part; specifically, we show concentration bounds on sample moments of ULA around its expectation rather than the moments of $\pi_{\eta}$ or $\pi$.
By the triangular inequality with operator norm $\|\cdot\|$, we have
\begin{equation}\label{eq:bvd}
    \left\|\hat{\Sigma}_{n}-\Cov(\pi)\right\|
    \le \underbrace{\left\|\Cov(\pi_{\eta})-\Cov(\pi)\right\|}_{\text{bias by discretization}}
    +\underbrace{\left\|\E\left[\hat{\Sigma}_{n}\right]-\Cov(\pi_{\eta})\right\|}_{\text{bias by non-stationarity}}
    +\underbrace{\left\|\hat{\Sigma}_{n}-\E\left[\hat{\Sigma}_{n}\right]\right\|}_{\text{variance by sampling}},
\end{equation}
where $\Cov(\pi_{\eta}):=\int_{\R^{d}}xx^{\top}\pi_{\eta}(\diff x)-\int_{\R^{d}}x\pi_{\eta}(\diff x)(\int_{\R^{d}}x\pi_{\eta}(\diff x))^{\top}$.
Since previous studies \citep[e.g.,][]{durmus2019high} show bounds on the 2-Wasserstein distances for the pair of $\pi_{\eta}$ and $\pi$ and that of the distribution of $X_{i}$ and $\pi_{\eta}$, our focus is on evaluating the variance term.

\subsection{Main contributions}\label{sec:intro:contrib}

Our contributions can be summarized as follows.
\begin{itemize}
    \item \textbf{Concentration for sample covariance matrices under dependence.}
    Using the log-Sobolev inequality, we derive a concentration bound on the sample covariance matrix around its expectation in operator norm, whose rate of convergence is $(1/\alpha)\sqrt{d/(\alpha \eta n)}$ (Theorem~\ref{thm:concentration}).
    This rate is reasonable since $1/\alpha$ is the largest eigenvalue of the covariance matrix under Gaussian settings, and $\alpha \eta n$ can be regarded as the terminal time of the Langevin dynamics which ULA approximates and should serve as the effective sample size in an ordinary sense.
    \item \textbf{Sample complexity for covariance estimation.}
    Combining the above variance bounds with existing non-asymptotic bias bounds for ULA, we obtain explicit sample complexity guarantees for estimating the covariance matrix of the target distribution $\pi$ via a bias--variance decomposition.
    Under standard settings, the total sample complexity is shown to be of order $\calO(\beta^{2}d^{2}(d+\log(1/\delta))\alpha^{-6}\epsilon^{-4})$ to achieve $\epsilon$-accuracy in operator norm with probability at least $1-\delta$ (Proposition~\ref{prop:comp:ula}).
    Notably, the estimation error does not deteriorate for large $n$ beyond the required sample complexity, which is a desirable property in statistics and optimization.

    \item \textbf{Quantitative comparison of sample complexity between ULA and embarrassingly parallel ULA.}
    We also derive sample complexity guarantees for covariance estimation via embarrassingly parallel ULA by the $N$ last iterates of independent path realizations of ULA as a comparison.
    The estimate on the total sample complexity of embarrassingly parallel ULA is shown to be larger than that of ULA by a \emph{logarithmic factor in $d$ and $\epsilon$}.
    While it is the comparison of upper bounds, both the derivations employ tight concentration inequalities via the isoperimetry argument and same known bounds on moments and Wasserstein distances.
    In addition, the difference can be attributed to the bias reduction mechanism by burn-in.
    The comparison quantifies the difference between single-chain ULA and embarrassingly parallel ULA in sample complexity for covariance estimation.
\end{itemize}
To derive these results, we establish a log-Sobolev inequality for the joint distribution of $(X_{m+1},\ldots,X_{m+n})$, which is our main technical contribution.
In particular, the joint distribution is shown to satisfy a log-Sobolev inequality with constant $2/(\alpha^{2}\eta)$ under a mild burn-in condition (Proposition~\ref{prop:lsi}); this constant is shown to be tight by exemplifying a Gaussian AR(1) process.

\subsection{Literature review}\label{sec:intro:literature}
We present literature review on empirical processes of MCMC, invariant distributions of ULA, and concentration under dependence.

\subsubsection*{Non-asymptotic analysis for empirical processes of MCMC}
We first review recent studies about non-asymptotic bounds on empirical processes of MCMC algorithms.
\citet[Section 4]{durmus2019high} consider Lipschitz concentration of ULA under the strong log-concavity and smoothness of target distributions via iterative moment bounds.
\citet{kook2024covariance} study concentration bounds for sample covariance matrices of high-accuracy MCMC (i.e., MCMC having invariant measures coincide with target measures and resulting polylogarithmic complexity in error tolerance level under warm-start) and query complexities for MCMC and i.i.d.~sampling; though our interest is in sample complexities, the interest of their research is close to ours.
\citet{shen2025high} derive high-dimensional normal approximations for the difference between the sample mean of stationary ULA and expectation of its target distribution.

\subsubsection*{Properties of invariant distributions of ULA}
Let us also introduce analysis on the invariant measure $\pi_{\eta}$ of ULA with stepsize $\eta>0$.
\citet[arXiv v4]{vempala2019rapid} show that $\pi_{\eta}$ satisfies a log-Sobolev inequality with constant $C\le 2/\alpha$.
\citet[arXiv v2]{altschuler2022concentration} reveal subgaussianity and subexponentiality of $\pi_{\eta}$ under the strong log-concavity and log-concavity of target distributions respectively.

\subsubsection*{Concentration under dependence}
Moment bounds and concentration bounds under dependence are also a classical but topical problem \citep[e.g.,][]{han2020moment,neeman2024concentration}.
While there are several strategies to derive sharp bounds as independent cases, one hopeful approach is to employ the isoperimetry of joint distributions, which is also pointed out in \citet{kook2024covariance}.
\citet{adamczak2015note} studies the Hanson--Wright inequality without assuming independent elements via the isoperimetry approach.
\citet{nakakita2025corrigendum} also employ this approach, assuming a log-Sobolev inequality of the joint distribution of dependent random vectors.
Our analysis follows this route: we establish a sharp, dimension-free log-Sobolev constant for the joint law of $(X_{m+1},\ldots,X_{m+n})$ generated by ULA, which leads to a concentration bound for the sample covariance matrix in operator norm.

\subsection{Notation}\label{sec:intro:notation}
For a $d$-dimensional vector $x\in\R^{d}$, let $|x|$ be the Euclidean norm of $x$.
For a matrix $A\in\R^{d_{1}\times d_{2}}$, let $A^{\top}$ be the transpose of $A$.
$\nabla$ denotes the gradient operator; when needed, it is understood in the weak sense.
For a symmetric matrix $A\in\R^{d\times d}$, let $\|A\|$ be the operator norm of $A$.
For two symmetric matrices $A,B\in\R^{d\times d}$, we write $A\preceq B$ if $B-A$ is positive semi-definite.
For a $d$-dimensional vector $x\in\R^{d}$, let $\delta_{x}(\cdot)=\delta_{0}(\cdot-x)$, where $\delta_{0}$ is the Dirac delta.
For a pair of probability distributions $\nu_{1}$ and $\nu_{2}$ with finite second moments, let $W_{2}(\nu_{1},\nu_{2})$ be the 2-Wasserstein distance between $\nu_{1}$ and $\nu_{2}$.

\subsection{Paper organization}\label{sec:intro:organization}
Section \ref{sec:concentration} presents our theoretical results for uniform concentration.
In Section \ref{sec:appl}, we apply the results to sample complexity analysis for the estimation of the covariance matrix of the target distribution $\pi$.
Appendix \ref{app:proof} gives the proofs of the theoretical results.

\section{Concentration on the sample covariance matrix of the unadjusted Langevin algorithm}\label{sec:concentration}
In this section, we present our theorem on the concentration of the sample covariance matrix \eqref{eq:def:samplemoments} of ULA around its expectation.
Furthermore, we discuss an extension to the projected ULA.

\subsection{Setup}\label{sec:concentration:setup}
We give definitions and assumptions used in this section and the rest of the paper.

\begin{definition}
    A probability distribution $\nu$ on $\R^{d}$ is said to satisfy a log-Sobolev inequality with constant $C\ge 0$ (denoted by LSI($C$)) if for any smooth function $g:\R^{d}\to\R$ with $\int_{\R^{d}}g^{2}(x)|\log g(x)|\mu(\diff x)<\infty$,
    \begin{equation*}
        \int_{\R^{d}}g^{2}(x)\log g^{2}(x)\nu(\diff x)-\left(\int_{\R^{d}}g^{2}(x)\nu(\diff x)\right)\log\left(\int_{\R^{d}}g^{2}(x)\nu(\diff x)\right)\le 2C\int_{\R^{d}}|\nabla g(x)|^{2}\nu(\diff x).
    \end{equation*}
\end{definition}
See \citet{bakry2014analysis}, \citet{vanhandel2016probability}, and \citet{chewi2023log} for details on log-Sobolev inequalities and their properties.
For example, the $d$-dimensional standard Gaussian distribution $\calN_{d}(0,I_{d})$ satisfies an LSI(1).

The following assumption is standard in the analysis of ULA \citep[e.g.,][]{durmus2019high}.
\begin{assumption}\label{asmp:potential}
    The potential function $f\in\calC^{2}(\R^{d};\R)$ satisfies $\alpha I_{d}\preceq \nabla^{2}f \preceq \beta I_{d}$ with some $\alpha,\beta>0$ ($\alpha\le \beta$).
\end{assumption}

\subsection{Concentration bound}\label{sec:concentration:main}
We now present our theorem on the concentration of the sample covariance matrix of ULA.
The proof is deferred to Appendix \ref{app:proof}.
\begin{theorem}\label{thm:concentration}
    Suppose that Assumption~\ref{asmp:potential} holds.
    Assume that $\eta\in(0,1/\beta)$ and the distribution of $X_{0}$ satisfies a log-Sobolev inequality with constant $\kappa_{0}$.
    If 
    \begin{equation*}
        m\ge 0\vee\left(\frac{\log(\kappa_{0}/\eta)}{\log(1/(1-\alpha\eta))}-1\right)\vee \left(\frac{\log(4(\E[|X_{0}-x^{\ast}|^{2}]+d/\alpha)/\eta n)}{\log(1/(1-\alpha\eta))}-1\right)
    \end{equation*}
    (with convention $\log0=-\infty$), then for any $\delta>0$, with probability at least $1-4\delta$,
    \begin{align*}
        \left\|\hat{\Sigma}_{n}-\E\left[\hat{\Sigma}_{n}\right]\right\|\le \frac{16}{\alpha}\left(\sqrt{\frac{2(9d+4\log\delta^{-1})}{\alpha\eta n}}\vee \left(\frac{9 d+4\log\delta^{-1}}{\alpha\eta n}\right)\right).
    \end{align*}
\end{theorem}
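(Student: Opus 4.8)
The plan is to reduce the operator-norm deviation to finitely many scalar quadratic forms and to control each one via the Gaussian-type concentration that the log-Sobolev inequality for the \emph{joint} law of the retained iterates provides. First I would invoke Proposition~\ref{prop:lsi}: under the stated lower bound on $m$ — whose second branch, $m\ge\log(\kappa_{0}/\eta)/\log(1/(1-\alpha\eta))-1$, is exactly what makes the influence of the initial law (of log-Sobolev constant $\kappa_{0}$) contract down to the one-step noise scale $\eta$ — the joint distribution $\mu$ of $\bX:=(X_{m+1},\dots,X_{m+n})$ on $\R^{nd}$ satisfies LSI($\kappa$) with $\kappa=2/(\alpha^{2}\eta)$. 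Then, fixing a $\tfrac14$-net $\calN$ of the unit sphere $S^{d-1}$ with $|\calN|\le 9^{d}$, the symmetric matrix $\hat\Sigma_{n}-\E[\hat\Sigma_{n}]$ satisfies $\|\hat\Sigma_{n}-\E[\hat\Sigma_{n}]\|\le 2\max_{v\in\calN}\bigl|v^{\top}(\hat\Sigma_{n}-\E[\hat\Sigma_{n}])v\bigr|$, so it suffices to bound $v^{\top}(\hat\Sigma_{n}-\E[\hat\Sigma_{n}])v$ for each fixed $v$ and to union-bound.

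For fixed $v\in\calN$, set $\bs_{v}=(v^{\top}X_{m+1},\dots,v^{\top}X_{m+n})=(I_{n}\otimes v^{\top})\bX\in\R^{n}$, a linear image of $\bX$ of operator norm $1$; the elementary identity $\tfrac1n\sum_{i}s_{i}^{2}-(\tfrac1n\sum_{i}s_{i})^{2}=\tfrac1n|\Pi\bs|^{2}$ with $\Pi=I_{n}-\tfrac1n\mathbf{1}\mathbf{1}^{\top}$ gives $v^{\top}\hat\Sigma_{n}v=\tfrac1n|\Pi\bs_{v}|^{2}$. Hence
\[
  \psi_{v}:=\sqrt{v^{\top}\hat\Sigma_{n}v}=\tfrac{1}{\sqrt n}\,|\Pi\bs_{v}|
\]
is the composition of the $1$-Lipschitz map $\bX\mapsto\bs_{v}$ with the $\tfrac1{\sqrt n}$-Lipschitz map $\bs\mapsto\tfrac1{\sqrt n}|\Pi\bs|$, so $\psi_{v}$ is $\tfrac1{\sqrt n}$-Lipschitz on $\R^{nd}$. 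The Herbst argument under LSI($\kappa$) then gives $\Pr(|\psi_{v}-\E\psi_{v}|\ge u)\le 2\exp(-nu^{2}/(2\kappa))$ for all $u>0$, while the Poincaré inequality entailed by LSI($\kappa$) gives $\mathrm{Var}(\psi_{v})\le\kappa/n$. To pass back to $v^{\top}\hat\Sigma_{n}v=\psi_{v}^{2}$ I would use $\psi_{v}^{2}-\E[\psi_{v}^{2}]=(\psi_{v}-\E\psi_{v})(\psi_{v}+\E\psi_{v})-\mathrm{Var}(\psi_{v})$ together with $\E\psi_{v}\le\sqrt{\E[\psi_{v}^{2}]}=\sqrt{v^{\top}\E[\hat\Sigma_{n}]v}$, which on $\{|\psi_{v}-\E\psi_{v}|<u\}$ yields $|v^{\top}(\hat\Sigma_{n}-\E[\hat\Sigma_{n}])v|<u^{2}+2u\sqrt{\|\E[\hat\Sigma_{n}]\|}+\kappa/n$. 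Choosing $u$ so that $nu^{2}/(2\kappa)\ge 9d+4\log\delta^{-1}$, union-bounding over the $|\calN|$ directions and the two signs (which costs probability at most $4\delta$), multiplying by the net factor $2$, substituting $\kappa=2/(\alpha^{2}\eta)$, and collecting numerical constants then produces the displayed bound — the two branches of the $\max$ being the subgaussian contribution $2u\sqrt{\|\E[\hat\Sigma_{n}]\|}$ and the squared-subgaussian (hence subexponential) contribution $u^{2}$ — and the $\calO(\cdot)$ statement follows by inspection.

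One ingredient used above still needs justification: the moment bound $\|\E[\hat\Sigma_{n}]\|\lesssim 1/\alpha$. I would obtain it by arguing that the burn-in condition forces the law of $X_{m}$, and hence of each $X_{m+i}$ with $i\le n$, close enough to the invariant law $\pi_{\eta}$ — which satisfies LSI($2/\alpha$), so $\Cov(\pi_{\eta})\preceq(2/\alpha)I_{d}$ — that $\Cov(X_{m+i})$ and the dispersion of $\E[X_{m+i}]$ across $i$ remain in an $O(1/\alpha)$ neighbourhood of their $\pi_{\eta}$-values, uniformly in $i$; the third branch of the lower bound on $m$ is what serves this moment control in the regime where the effective sample size $\alpha\eta n$ does not already dominate $d$.

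The main obstacle, however, is upstream: the sharp log-Sobolev constant $2/(\alpha^{2}\eta)$ of the joint law (Proposition~\ref{prop:lsi}). Getting this constant to display the effective-sample-size scaling $\alpha\eta n$, rather than losing a factor at every step, is what requires combining the drift contraction $\|\mathrm{id}-\eta\nabla f\|_{\mathrm{Lip}}\le 1-\alpha\eta$ with the stability of log-Sobolev inequalities under the Gaussian convolutions along the Markov chain, and is precisely where burn-in is needed; with that proposition in hand, the remaining work is the net-and-union-bound bookkeeping sketched above.
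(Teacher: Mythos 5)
Your proposal is correct in spirit and would yield the theorem, but it takes a genuinely different route from the paper, so a comparison is worthwhile. The paper first rewrites $\hat\Sigma_n=\tfrac1n\sum_i X_i^{\ctr}(X_i^{\ctr})^\top-\bar X_n^{\ctr}(\bar X_n^{\ctr})^\top$ and splits $\|\hat\Sigma_n-\E\hat\Sigma_n\|$ by the triangle inequality into two pieces (Propositions~\ref{prop:first} and~\ref{prop:second}), each of which, after projecting onto a net direction $u$, is a positive-semidefinite quadratic form $Y^\top AY$ in the joint vector $Y=(X_{m+1}^{\ctr},\dots,X_{m+n}^{\ctr})$; it then applies the mgf bound of Lemma~\ref{lem:nai25} directly to that quadratic form. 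You instead keep $v^\top\hat\Sigma_n v=\tfrac1n\bs_v^\top\Pi\bs_v$ as a single quadratic form, pass to the $\tfrac1{\sqrt n}$-Lipschitz square root $\psi_v=\tfrac1{\sqrt n}|\Pi\bs_v|$, invoke Herbst's Lipschitz concentration under LSI($2/(\alpha^2\eta)$) plus the Poincar\'{e} variance bound, and then re-square via $\psi_v^2-\E\psi_v^2=(\psi_v-\E\psi_v)(\psi_v+\E\psi_v)-\mathrm{Var}(\psi_v)$. This is the classical ``chi-square via Lipschitz'' argument; it avoids the Hanson--Wright-type Lemma~\ref{lem:nai25} at the cost of the square-root/re-square bookkeeping and of an explicit bound on $\E\psi_v\le\sqrt{\|\E\hat\Sigma_n\|}$. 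You correctly flag that $\|\E\hat\Sigma_n\|\lesssim 1/\alpha$ is the one ingredient needing real work: this is exactly what the paper's proof of Proposition~\ref{prop:second} supplies, deriving $\E[\tfrac1n\sum_i(u^\top X_i^{\ctr})^2]\le 4/\alpha$ from the marginal LSI of Lemma~\ref{lem:vem19} together with the Wasserstein contraction of Proposition~\ref{prop:wasserstein}, with the third branch of the burn-in condition tuned precisely to kill the non-stationary contribution. Since $\hat\Sigma_n\preceq\tfrac1n\sum_i X_i^{\ctr}(X_i^{\ctr})^\top$, the same bound gives $\|\E\hat\Sigma_n\|\le 4/\alpha$, so your sketch closes. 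The constants you would obtain differ slightly from the paper's $\tfrac{16}{\alpha}(8\sqrt r\vee r)$ with $r=(9d+4\log\delta^{-1})/(\alpha\eta n)$, but are of the same order; the upstream input — Proposition~\ref{prop:lsi}'s sharp joint LSI constant $2/(\alpha^2\eta)$ — is the same in both arguments and is indeed, as you say, where the work really lies.
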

Note that selecting $x=x^{\ast}=\argmin_{x}f(x)$ benefits the reduction of the bias terms in \eqref{eq:bvd} in our argument employing \citet{durmus2019high}.
Solving a optimization problem for strongly convex smooth $f$ is computationally easy (e.g., the gradient descent shows linear convergence), and its complexity is negligible in comparison to that of the sampling problem as \citet{altschuler2024faster} point out.
Let us remark that the rate of convergence $(1/\alpha)\sqrt{d/(\alpha\eta n)}$ is reasonable: the scaling $\alpha$ is natural since $\|\Sigma\|=1/\alpha$ for a Gaussian case $f(x)=x^{\top}\Sigma^{-1}x/2$ with a positive definite matrix $\Sigma$, and the effective sample size $\alpha n\eta$ is standard from the Langevin dynamics as a continuous analogue of ULA, whose effective sample size is $\alpha \eta n$ \citep[e.g., see][]{bakry2014analysis}.

\subsection{Technical remarks on Theorem~\ref{thm:concentration}}\label{sec:concentration:remark}
The technical core of Theorem~\ref{thm:concentration} is to show that the joint distribution of $(X_{m+1},\ldots,X_{m+n})$ generated by ULA satisfies a log-Sobolev inequality with constant $2/(\alpha^{2}\eta)$ under a mild burn-in condition.
As the log-Sobolev constant quantifies both the tail behaviour and dependence, we can derive a concentration bound for the sample covariance matrix even under dependence as \citet{adamczak2015note}.

We also remark the role of the burn-in condition in Theorem~\ref{thm:concentration}.
First, the condition $m\ge \log(\kappa_{0}/\eta)/\log(1/(1-\alpha\eta))$ is to eliminate the effect of the initial distribution of $X_{0}$ if its log-Sobolev constant is large and thus its tail behaviour is relatively heavy.
Second, the condition $m\ge \left(\log(4(\E[|X_{0}-x^{\ast}|^{2}]+d/\alpha)/\eta n)/\log(1/(1-\alpha\eta))\right)$ is to remove the effect of the non-stationary nature of ULA due to the biased initial distribution.

\section{Sample complexity guarantees for covariance estimation}\label{sec:appl}
In this section, using Theorem \ref{thm:concentration}, we show sample complexity estimates for single-chain ULA and  embarrassingly parallel ULA.
For single-chain ULA, we show sample complexity estimates $n(\delta,\epsilon)$ and $m(\delta,\epsilon)$ (burn-in complexity) to achieve that for any $\epsilon$ and $\delta$, for all $m\ge m(\delta,\epsilon)$ and $n\ge n(\delta,\epsilon)$,
\begin{equation*}
    \Pr\left(\left\|\hat{\Sigma}_{n}-\Cov(\pi)\right\|\le \epsilon\right)\ge 1-\delta.
\end{equation*}
For the embarrassingly parallel ULA, we also derive sample complexity estimates $N_{\text{ep}}(\delta,\epsilon)$ and $m_{\text{ep}}(\delta,\epsilon)$ to let the following bound hold for all $m\ge m_{\text{ep}}(\delta,\epsilon)$ and $N\ge N_{\text{ep}}(\delta,\epsilon)$:
\begin{equation*}
    \Pr\left(\left\|\tilde{\Sigma}_{N}-\Cov(\pi)\right\|\le \epsilon\right)\ge 1-\delta,
\end{equation*}
where $\tilde{\Sigma}_{N}$ is defined as
\begin{equation*}
    \tilde{\Sigma}_{N}=\frac{1}{N}\sum_{k=1}^{N}X_{m+1}^{k}(X_{m+1}^{k})^{\top}-\left(\frac{1}{N}\sum_{k=1}^{N}X_{m+1}^{k}\right)\left(\frac{1}{N}\sum_{k=1}^{N}X_{m+1}^{k}\right)^{\top},
\end{equation*}
and $X_{m+1}^{k}$ is the last iterate of ULA defined in \eqref{eq:def:ep}.
Note that the total sample complexity estimates for single-chain ULA and the embarrassingly parallel ULA are $m(\delta,\epsilon)+n(\delta,\epsilon)$ and $N_{\text{ep}}(\delta,\epsilon)m_{\text{ep}}(\delta,\epsilon)$ respectively.

\subsection{Sample complexity for the unadjusted Langevin algorithm}
We obtain the following high-probability bound for covariance estimation via ULA.
\begin{proposition}\label{prop:comp:ula}
    Suppose that Assumption~\ref{asmp:potential} holds. Fix $\epsilon\le \min\{1,1/\alpha\}$ and assume that $X_{0}=x^{\ast}=\argmin_{x\in\R^{d}}f(x)$.
    For any $\delta>0$, if 
    \begin{equation*}
        \eta\le \frac{\alpha^{3}\epsilon^{2}}{2700\beta^{2}d^{2}},\ n\ge \frac{2^{9}\cdot 3^{2}}{\eta}\left(\frac{9d+4\log(4/\delta)}{\alpha^{3}\epsilon^{2}}\right),\ 
        m\ge \frac{\log((4d)/(\alpha\eta n))}{\log(1/(1-\alpha\eta))}-1,\ 
    \end{equation*}
    then
    \begin{equation*}
        \Pr\left(\left\|\hat{\Sigma}_{n}-\Cov(\pi)\right\|\le \epsilon\right)\ge 1-\delta.
    \end{equation*}
\end{proposition}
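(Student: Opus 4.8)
The plan is to split the error through the bias--variance decomposition \eqref{eq:bvd} and to bound the three terms separately, with the thresholds on $\eta$, $n$ and $m$ chosen so that their sum stays below $\epsilon$. For the variance term I would apply Theorem~\ref{thm:concentration} after checking its hypotheses: the assumption $\eta\le\alpha^{3}\epsilon^{2}/(2700\beta^{2}d^{2})$ together with $\epsilon\le1/\alpha$, $\alpha\le\beta$ and $d\ge1$ gives $\eta<1/\beta$, and since $X_{0}=\delta_{x^{\ast}}$ has log-Sobolev constant $\kappa_{0}=0$ and $\E[|X_{0}-x^{\ast}|^{2}]=0$, the three-part burn-in condition of Theorem~\ref{thm:concentration} collapses (with $\log 0=-\infty$) to exactly $m\ge\log(4d/(\alpha\eta n))/\log(1/(1-\alpha\eta))-1$, which is what the proposition imposes. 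Applying Theorem~\ref{thm:concentration} with $\delta$ replaced by $\delta/4$ then gives, on an event of probability at least $1-\delta$, control of $\|\hat{\Sigma}_{n}-\E[\hat{\Sigma}_{n}]\|$; the lower bound on $n$ forces $(9d+4\log(4/\delta))/(\alpha\eta n)\le1$ (using $\alpha\epsilon\le1$), so the maximum in Theorem~\ref{thm:concentration} is attained by the square-root branch, and the numerical factor $2^{9}\cdot3^{2}$ in the $n$-threshold is what pins down the constant in the resulting $\mathcal{O}(\epsilon)$ bound.

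For both bias terms I would use the elementary inequality
\[
    \|\Cov(\nu_{1})-\Cov(\nu_{2})\|\le 2\,W_{2}(\nu_{1},\nu_{2})\sqrt{M}+W_{2}(\nu_{1},\nu_{2})^{2},
\]
valid whenever the second moments of $\nu_{1}$ and $\nu_{2}$ about a common point are at most $M$; it follows by expressing $\Cov(\nu_{i})$ through the second-moment matrix about $x^{\ast}$, taking an optimal $W_{2}$-coupling, and applying Cauchy--Schwarz in the test direction. For the discretization bias $\|\Cov(\pi_{\eta})-\Cov(\pi)\|$ I would take $M\lesssim d/\alpha$ (from $\E_{\pi}[|X-x^{\ast}|^{2}]\le d/\alpha$ for $\alpha$-strongly log-concave $\pi$, its analogue for $\pi_{\eta}$, and $\|\Cov(\pi)\|\le1/\alpha$, $\|\Cov(\pi_{\eta})\|\le2/\alpha$ via \citet{vempala2019rapid}) together with the known $W_{2}(\pi_{\eta},\pi)=\mathcal{O}(\sqrt{\eta}\,\mathrm{poly}(\alpha,\beta,d))$ estimate of \citet{durmus2019high}; the threshold $\eta\le\alpha^{3}\epsilon^{2}/(2700\beta^{2}d^{2})$ is calibrated so that this term is a fixed fraction of $\epsilon$. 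For the non-stationarity bias $\|\E[\hat{\Sigma}_{n}]-\Cov(\pi_{\eta})\|$ I would expand $\E[\hat{\Sigma}_{n}]$ through $\{\E[X_{i}X_{i}^{\top}],\E[X_{i}]\}_{i=m+1}^{m+n}$, compare termwise with $\pi_{\eta}$, and use the geometric $W_{2}$-contraction of $\mathrm{Law}(X_{i})$ toward $\pi_{\eta}$ at rate $(1-\alpha\eta)^{i}$ together with the moment bounds of \citet{durmus2019high} for the chain started at $x^{\ast}$; averaging over $i\ge m+1$ and invoking the already-imposed burn-in $m\ge\log(4d/(\alpha\eta n))/\log(1/(1-\alpha\eta))-1$ makes this term the required size, so no extra burn-in is needed. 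Summing the three pieces on the high-probability event yields $\|\hat{\Sigma}_{n}-\Cov(\pi)\|\le\epsilon$.

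The main obstacle is bookkeeping rather than any new idea: converting the $W_{2}$ estimates into operator-norm control of covariance matrices without losing dimension factors, and propagating the numerical constants through the two bias terms so that each fits under the prescribed thresholds. One must also take care to invoke the contraction and moment estimates of \citet{durmus2019high} in the form valid for the \emph{discretized} chain started at $x^{\ast}$ (not merely the continuous diffusion), and to exploit $\epsilon\le\min\{1,1/\alpha\}$ wherever $\alpha\epsilon\le1$ is used, e.g.\ to collapse the maximum in Theorem~\ref{thm:concentration} and to absorb the $W_{2}^{2}$ term into the $W_{2}$ term.
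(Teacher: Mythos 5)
Your plan is structurally the same as the paper's: bias--variance decomposition~\eqref{eq:bvd}, Theorem~\ref{thm:concentration} for the variance term (correctly noting that $\kappa_{0}=0$ and $\E[|X_{0}-x^{\ast}|^{2}]=0$ collapse the burn-in condition, and that the lower bound on $n$ selects the square-root branch), and a $W_{2}$-based covariance perturbation estimate (your $2W_{2}\sqrt{M}+W_{2}^{2}$ form is a legitimate variant of Lemma~\ref{lem:cov}(b) with the same flavour) combined with the moment and Wasserstein bounds of \citet{durmus2019high} for the two bias terms. Steps~1 and~3 of the paper's proof are essentially what you describe.

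However, there is a genuine gap in your treatment of the non-stationarity bias $\|\E[\hat{\Sigma}_{n}]-\Cov(\pi_{\eta})\|$. You propose to ``expand $\E[\hat{\Sigma}_{n}]$ through $\{\E[X_{i}X_{i}^{\top}],\E[X_{i}]\}_{i}$ and compare termwise with $\pi_{\eta}$,'' and you conclude that the already-imposed burn-in makes this term small. But $\E[\hat{\Sigma}_{n}] = \frac{1}{n}\sum_{i}\E[X_{i}X_{i}^{\top}] - \E[\bar{X}_{n}\bar{X}_{n}^{\top}]$, and since $\E[\bar{X}_{n}\bar{X}_{n}^{\top}]=\frac{1}{n^{2}}\sum_{i,j}\E[X_{i}X_{j}^{\top}]$ the expectation of $\hat{\Sigma}_{n}$ is \emph{not} a function of the marginal first and second moments alone: the off-diagonal cross-moments $\E[X_{i}X_{j}^{\top}]$, $i\ne j$, enter through $\Cov(\bar{X}_{n})=\E[\bar{X}_{n}\bar{X}_{n}^{\top}]-\E[\bar{X}_{n}]\E[\bar{X}_{n}]^{\top}$. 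This positive semi-definite matrix is of order $1/(\alpha^{2}\eta n)$ and is controlled by the \emph{sample size} $n$ (via the Poincar\'{e} inequality of the joint law of $(X_{m+1},\ldots,X_{m+n})$ applied to a linear functional, i.e.\ Lemma~\ref{lem:pi} together with Proposition~\ref{prop:lsi}), not by the burn-in $m$. Any termwise comparison of marginal laws with $\pi_{\eta}$ cannot see this term. The paper's Step~2 isolates it explicitly (the $2/(\alpha^{2}\eta n)$ piece) and uses the $n$-condition to make it $\le\epsilon/6$. You need to insert the same triangle-inequality split of $\E[\bar{X}_{n}\bar{X}_{n}^{\top}]$ into $\Cov(\bar{X}_{n})+\E[\bar{X}_{n}]\E[\bar{X}_{n}]^{\top}$ and control $\|\Cov(\bar{X}_{n})\|$ via the joint Poincar\'{e} inequality; otherwise the argument is incomplete.

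One smaller point: the bounds $\|\Cov(\pi)\|\le 1/\alpha$, $\|\Cov(\pi_{\eta})\|\le 2/\alpha$ you cite are not what is needed in the bias calculation; what is used is the second-moment control about $x^{\ast}$, namely $\int|x-x^{\ast}|^{2}\pi(\diff x)\le d/\alpha$ and $\int|x-x^{\ast}|^{2}\pi_{\eta}(\diff x)\le 2d/\alpha$ (Proposition~\ref{prop:moment}), since those are the quantities appearing in Lemma~\ref{lem:cov}(b). You do also mention the $M\lesssim d/\alpha$ bound, so this is mostly a matter of stating the right objects, but be careful not to conflate the operator norm of the covariance with the scalar second moment.
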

The assumption on the initial value $X_{0}=x^{\ast}$ is just for a concise presentation; if one consider other initial values or warm-start, its effect can be eliminated by an additional burn-in phase, whose length is only logarithmic in $|x_0 - x^\ast|$ when $X_0 = x_0$ is deterministic (cold start), or in $\mathbb{E}[|X_0 - x^\ast|^2]$ when $X_0$ is random (warm start).

Using this proposition, we derive the following sample complexity estimate and burn-in complexity estimate: by $\log(1/(1-x))\ge x$ for all $x\in[0,1)$, 
\begin{align*}
    n(\delta,\epsilon)=\calO\left(\frac{\beta^{2}d^{2}(d+\log(1/\delta))}{\alpha^{6}\epsilon^{4}}\right),\ 
    m(\delta,\epsilon)=\calO\left(\frac{\beta^{2}d^{2}}{\alpha^{4}\epsilon^{2}}\right)=\calO(n(\delta,\epsilon)).
\end{align*}
The total sample complexity, the order of $n(\delta,\epsilon)+m(\delta,\epsilon)$, is given as
\begin{equation}\label{eq:comp:ula}
    n(\delta,\epsilon)+m(\delta,\epsilon)=\calO\left(\frac{\beta^{2}d^{2}(d+\log(1/\delta))}{\alpha^{6}\epsilon^{4}}\right).
\end{equation}
To the best of our knowledge, this is the first estimate for the sample complexity of covariance matrix estimation for strongly log-concave smooth distributions via ULA.
This guarantee enjoys the following properties: (i) the estimation error does not deteriorate for large $n$ beyond the sample complexity, which is important in statistics, and (ii) cold start is allowed, which is also important in practice.

\subsection{Comparison with embarrassingly parallel ULA}
As a comparison, we derive the following high-probability guarantee for the embarrassingly parallel ULA.
\begin{proposition}\label{prop:comp:ep}
    Suppose that Assumption~\ref{asmp:potential} holds and fix $\epsilon\le \min\{1,1/\alpha\}$.
    For all $k=1,\ldots,N$, let $X_{0}^{k}=x^{\ast}=\argmin_{x\in\R^{d}}f(x)$.
    For any $\delta>0$, if 
    \begin{equation*}
        \eta\le \frac{\alpha^{3}\epsilon^{2}}{2700\beta^{2}d^{2}},\ m\ge \frac{2\log(48d/(\alpha\epsilon) )}{\log\left(1/(1-\alpha\eta)\right)}-1,\ N\ge \frac{2^{8}\cdot 3^{2}(9d+4\log(4/\delta))}{\alpha^{2}\epsilon^{2}},
    \end{equation*}
    then 
    \begin{equation*}
        \Pr\left(\left\|\tilde{\Sigma}_{N}-\Cov(\pi)\right\|\le \epsilon\right)\ge 1-\delta.
    \end{equation*}
\end{proposition}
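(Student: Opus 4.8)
The plan is to bound $\|\tilde\Sigma_{N}-\Cov(\pi)\|$ by the bias--variance decomposition of \eqref{eq:bvd}, specialized to the i.i.d.\ setting of \eqref{eq:def:ep}. Write $\mu_{m+1}$ for the law of the $(m+1)$-th ULA iterate started from $x^{\ast}$; the vectors $X_{m+1}^{1},\dots,X_{m+1}^{N}$ are i.i.d.\ copies drawn from $\mu_{m+1}$, so $\E[\tilde\Sigma_{N}]=\frac{N-1}{N}\Cov(\mu_{m+1})$ and
\begin{align*}
\|\tilde\Sigma_{N}-\Cov(\pi)\|
&\le \underbrace{\|\Cov(\pi_{\eta})-\Cov(\pi)\|}_{\mathrm{(I)}}
+\underbrace{\|\Cov(\mu_{m+1})-\Cov(\pi_{\eta})\|+\tfrac{1}{N}\|\Cov(\mu_{m+1})\|}_{\mathrm{(II)}}\\
&\quad+\underbrace{\|\tilde\Sigma_{N}-\E[\tilde\Sigma_{N}]\|}_{\mathrm{(III)}}.
\end{align*}
I would show that the three hypotheses on $\eta$, $m$ and $N$ force (I), (II) and (III) each to be at most $\epsilon/3$, the last with probability at least $1-\delta$.

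For (I) and the first summand of (II) I would use an elementary lemma converting a $2$-Wasserstein bound together with second-moment control into an operator-norm bound on a difference of covariances, roughly of the form $\|\Cov(\mu)-\Cov(\nu)\|\lesssim W_{2}(\mu,\nu)\sqrt{M}+W_{2}(\mu,\nu)^{2}$ with $M$ a second moment around $x^{\ast}$. Combined with the known discretization-bias bound $W_{2}(\pi_{\eta},\pi)\lesssim(\beta/\alpha)\sqrt{d\eta}$ \citep{durmus2019high} and the uniform estimate $\E[|X-x^{\ast}|^{2}]\lesssim d/\alpha$ under $\pi$, $\pi_{\eta}$ and $\mu_{m+1}$, the constraint $\eta\le\alpha^{3}\epsilon^{2}/(2700\beta^{2}d^{2})$ yields (I)$\le\epsilon/3$. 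For the non-stationarity gap, since $\eta<1/\beta$ the map $x\mapsto x-\eta\nabla f(x)$ is $(1-\alpha\eta)$-Lipschitz, so the synchronous coupling of the ULA kernel gives $W_{2}(\mu_{m+1},\pi_{\eta})\le(1-\alpha\eta)^{m+1}W_{2}(\delta_{x^{\ast}},\pi_{\eta})\lesssim(1-\alpha\eta)^{m+1}\sqrt{d/\alpha}$, and the conversion lemma turns this into $\|\Cov(\mu_{m+1})-\Cov(\pi_{\eta})\|\lesssim(1-\alpha\eta)^{m+1}d/\alpha$; the burn-in threshold $m\ge 2\log(48d/(\alpha\epsilon))/\log(1/(1-\alpha\eta))-1$, whose slack absorbs the universal constants, makes this $\le\epsilon/3$. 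Finally $\|\Cov(\mu_{m+1})\|\le 2/\alpha$ because $\mu_{m+1}$ satisfies a log-Sobolev, hence a Poincar\'{e}, inequality with constant $\le 2/\alpha$ (see below), so the residual $\tfrac{1}{N}\|\Cov(\mu_{m+1})\|\le 2/(\alpha N)$ is negligible under the stated lower bound on $N$.

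For (III), the key simplification relative to Theorem~\ref{thm:concentration} is that the joint law is a product. Propagating the log-Sobolev constant along the ULA recursion started from $\delta_{x^{\ast}}$---a Dirac mass satisfies LSI$(0)$, an $L$-Lipschitz pushforward multiplies the constant by $L^{2}$, and convolution with $\calN_{d}(0,2\eta I_{d})$ adds $2\eta$---shows that $\mu_{m+1}$ satisfies LSI$(C_{m+1})$ with $C_{m+1}=2\eta\sum_{j=0}^{m}(1-\alpha\eta)^{2j}\le 2/(\alpha(2-\alpha\eta))\le 2/\alpha$, where $\alpha\eta<1$ since $\eta<1/\beta\le1/\alpha$. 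By tensorization the joint law of $(X_{m+1}^{1},\dots,X_{m+1}^{N})$ on $\R^{Nd}$ satisfies an LSI with the same constant $\le 2/\alpha$. Feeding this into the isoperimetric (Hanson--Wright-type) argument behind Theorem~\ref{thm:concentration}, now with joint log-Sobolev constant $2/\alpha$ in place of $2/(\alpha^{2}\eta)$ and $N$ in place of $n$, gives
\[
\bigl\|\tilde\Sigma_{N}-\E[\tilde\Sigma_{N}]\bigr\|\lesssim \frac{1}{\alpha}\Bigl(\sqrt{\tfrac{d+\log\delta^{-1}}{N}}\vee\tfrac{d+\log\delta^{-1}}{N}\Bigr)
\]
with probability at least $1-\calO(\delta)$; under $N\ge 2^{8}\cdot 3^{2}(9d+4\log(4/\delta))/(\alpha^{2}\epsilon^{2})$ and $\epsilon\le 1/\alpha$ this is $\le\epsilon/3$. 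A union bound over the three events and the rescaling of the failure probability that produces the $\log(4/\delta)$ then complete the proof.

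The main obstacle---and the step that decides the comparison with single-chain ULA---is (II): here the per-path non-stationarity bias must be pushed down to $\calO(\epsilon)$ with no averaging gain (unlike the single-chain bias, which is additionally divided by $n$), which forces $m=\Theta(\log(d/(\alpha\epsilon))/\log(1/(1-\alpha\eta)))=\Theta(\alpha^{-1}\eta^{-1}\log(d/(\alpha\epsilon)))$; since this burn-in is incurred by each of the $N$ chains, the product $Nm$ carries exactly the extra logarithmic factor in $d$ and $\epsilon^{-1}$. The remaining delicate points are the uniform-in-$m$ second-moment control used in the Wasserstein-to-operator-norm conversion, and checking that the product structure genuinely reduces (III) to a clean specialization of the proof of Theorem~\ref{thm:concentration} rather than a fresh argument.
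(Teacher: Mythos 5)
Your proposal is correct and follows essentially the same route as the paper: a three-term bias--variance split with the Wasserstein-to-covariance lemma handling the discretization and non-stationarity biases, LSI propagation from $\delta_{x^*}$ (giving constant $\le 2/\alpha$) plus tensorization feeding the isoperimetric concentration argument for the variance, and the same $\epsilon/3$ budget for each piece. The only cosmetic difference is that you bound the non-stationarity bias via the exact identity $\E[\tilde\Sigma_N]=\frac{N-1}{N}\Cov(\mu_{m+1})$ and Poincar\'e for $\|\Cov(\mu_{m+1})\|$, while the paper decomposes the same quantity into second-moment, sample-mean-covariance, and mean-outer-product terms; both yield $\frac{8d}{\alpha}(1-\alpha\eta)^{(m+1)/2}+\frac{2}{\alpha N}$.
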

We obtain the following sampling complexity estimate: using $\log(1/(1-x))\ge x$ for all $x\in[0,1)$, we obtain that
\begin{equation*}
    m_{\text{ep}}(\delta,\epsilon)=\calO\left(\frac{\beta^{2}d^{2}\log\left(d/(\alpha\epsilon)\right)}{\alpha^{4}\epsilon^{2}}\right),\ N_{\text{ep}}(\delta,\epsilon)=\calO\left(\frac{d+\log \delta^{-1}}{\alpha^{2}\epsilon^{2}}\right),
\end{equation*}
and thus, the total sample complexity is
\begin{equation}\label{eq:comp:ep}
    N_{\text{ep}}(\delta,\epsilon)m_{\text{ep}}(\delta,\epsilon)=\calO\left(\frac{\beta^{2}d^{2}(d+\log \delta^{-1})}{\alpha^{6}\epsilon^{4}}\log\left(\frac{d}{\alpha\epsilon}\right)\right).
\end{equation}

Comparing \eqref{eq:comp:ula} and \eqref{eq:comp:ep}, the difference in the total sample complexity of single-chain ULA and embarrassingly parallel ULA is a logarithmic factor in $d$ and $\epsilon^{-1}$.
This difference comes from the bias term due to non-stationarity.
For single-chain ULA, the bias by non-stationarity is of order $\calO((1/n\eta)(1-\alpha\eta)^{m/2})$; it means that the bias caused by finite burn-in steps $m$ is mitigated by large $n\eta$, which is originally taken to be large to reduce the variance term as seen in Theorem \ref{thm:concentration}.
On the other hand, the corresponding term is of order $\calO((1-\alpha\eta)^{m/2})$ for embarrassingly parallel ULA, which cannot be reduced by increasing $N$.
It means that embarrassingly parallel ULA requires large $m$ for every iteration of the path of ULA.
This difference affects the total sample complexity.

\section*{Acknowledgements}
This work was supported by JSPS KAKENHI Grant Number JP24K02904 and JST PRESTO Grant Number JPMJPR24K7.

\appendix
\section{Proofs}\label{app:proof}
\subsection{Preliminaries}
In this section, we prepare some technical results for the proofs of Propositions \ref{prop:first} and \ref{prop:second}.
The following proposition gives a log-Sobolev constant of the joint distribution of $(X_{m+1},\ldots,X_{m+n})$, which is a technical key tool in the proofs of the propositions.
\begin{proposition}[LSI for joint distributions]\label{prop:lsi}
    Suppose that Assumption~\ref{asmp:potential} holds.
    Moreover, assume the following:
    \begin{itemize}
        \item[(a)] For some $\kappa_{0}\ge0$, the distribution of $X_{0}$ satisfies a log-Sobolev inequality with constant $\kappa_{0}$.
        \item[(b)] $\eta\in(0,1/\beta)$.
        \item[(c)] $m\ge 0\vee(\frac{\log(\kappa_{0}/(2\eta))}{\log(1/(1-\alpha\eta))}-1)$ (with convention $\log0=-\infty$).
    \end{itemize}
    Then, the joint probability distribution of $(X_{m+1},\ldots,X_{m+n})$ satisfies a log-Sobolev inequality with constant $2/(\alpha^{2}\eta)$.
\end{proposition}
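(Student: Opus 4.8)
The plan is to realize the joint law of $(X_{m+1},\dots,X_{m+n})$ as the pushforward of a product measure under an explicit map, and then combine the tensorization and pushforward rules for log-Sobolev inequalities in matrix-weighted form. Write $T(x)=x-\eta\nabla f(x)$. Under Assumption~\ref{asmp:potential} and $\eta<1/\beta$ we have $(1-\eta\beta)I_d\preceq DT(x)\preceq(1-\eta\alpha)I_d$ with $1-\eta\beta>0$, hence $\|DT(x)\|\le 1-\alpha\eta=:\rho\in(0,1)$ for every $x$. Since $Z_{m+2},\dots,Z_{m+n}$ are i.i.d.\ standard Gaussian and independent of $X_{m+1}$, we may write $(X_{m+1},\dots,X_{m+n})=\Psi\bigl(X_{m+1},\sqrt{2\eta}Z_{m+2},\dots,\sqrt{2\eta}Z_{m+n}\bigr)$, where $\Psi$ builds the chain forward through $T$, and the argument has law $\mu_{m+1}\otimes\mathcal N_d(0,2\eta I_d)^{\otimes(n-1)}$ with $\mu_{m+1}=\mathrm{Law}(X_{m+1})$. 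If $\mu_{m+1}$ satisfies LSI($\kappa_{m+1}$), tensorization (using LSI($2\eta$) for $\mathcal N_d(0,2\eta I_d)$) shows this product satisfies the matrix-weighted log-Sobolev inequality with weight $D=\mathrm{diag}(\kappa_{m+1}I_d,2\eta I_d,\dots,2\eta I_d)$, and the pushforward rule then gives that the joint law satisfies LSI with constant $\sup_x\|J(x)\,D\,J(x)^\top\|$, where $J=D\Psi$ is block lower triangular with $(k,j)$-block ($k\ge j$) a product of $k-j$ factors $DT(\cdot)$, of operator norm at most $\rho^{k-j}$. So the statement reduces to (i) $\kappa_{m+1}\le 2/\alpha$ under hypothesis (c), and (ii) $\sup_x\|JDJ^\top\|\le 2/(\alpha^2\eta)$ whenever $\kappa_{m+1}\le 2/\alpha$.

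For (i), the same tensorization/pushforward argument applied to $\mathrm{Law}(X_j)$, which is the pushforward of $\mathrm{Law}(X_{j-1})\otimes\mathcal N_d(0,2\eta I_d)$ under $\Phi(x,w)=T(x)+w$, gives the recursion $\kappa_j\le\|\kappa_{j-1}DT\,DT^\top+2\eta I_d\|\le\rho^2\kappa_{j-1}+2\eta$. Unrolling from LSI($\kappa_0$) for $X_0$ yields $\kappa_{m+1}\le\rho^{2(m+1)}\kappa_0+\tfrac{2\eta}{1-\rho^2}$. Hypothesis (c) is exactly $\rho^{m+1}\kappa_0\le 2\eta$, and $m\ge0$ gives $\rho^{m+1}\le\rho$, so $\rho^{2(m+1)}\kappa_0\le 2\eta(1-\alpha\eta)$; combined with $\tfrac{2\eta}{1-\rho^2}=\tfrac{2}{\alpha(2-\alpha\eta)}$ this gives $\kappa_{m+1}\le 2\eta(1-\alpha\eta)+\tfrac{2}{\alpha(2-\alpha\eta)}$, which a one-line computation (equivalent to $(1-\alpha\eta)^2\ge0$) bounds by $2/\alpha$.

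Step (ii) is the technical heart. I would bound $\|JDJ^\top\|=\|D^{1/2}J^\top\|^2$ by reducing to a scalar estimate. For $\xi=(\xi_1,\dots,\xi_n)$ the components of $J^\top\xi$ satisfy the backward recursion $\psi_j=\xi_j+DT(X_{m+1+j})^\top\psi_{j+1}$, hence $|\psi_j|\le|\xi_j|+\rho|\psi_{j+1}|$ and $|\psi_j|\le\sum_{\ell\ge0}\rho^{\ell}|\xi_{j+\ell}|$; consequently $\|D^{1/2}J^\top\xi\|^2\le\|L D_0 L^\top\|\,|\xi|^2$ with $D_0=\mathrm{diag}(\kappa_{m+1},2\eta,\dots,2\eta)$ scalar and $L_{kj}=\rho^{k-j}\mathbf 1_{\{k\ge j\}}$. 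Since $L^\top L\preceq\tfrac{1}{1-\rho^2}T$ for the Kac--Murdock--Szeg\H{o} matrix $T_{jk}=\rho^{|j-k|}$, it remains to show $\lambda_{\max}(D_0^{1/2}TD_0^{1/2})\le\tfrac{2(2-\alpha\eta)}{\alpha}$, equivalently $T^{-1}\succeq\tfrac{\alpha}{2(2-\alpha\eta)}D_0$. Using the explicit tridiagonal form of $T^{-1}$ (diagonal $(1,1+\rho^2,\dots,1+\rho^2,1)/(1-\rho^2)$, off-diagonal $-\rho/(1-\rho^2)$) together with $\kappa_{m+1}\le 2/\alpha$, after clearing the common factor $1-\rho^2=\alpha\eta(2-\alpha\eta)$ this reduces to $\mathrm{Tridiag}\succeq\mathrm{diag}(1-\rho,(1-\rho)^2,\dots,(1-\rho)^2)$; the difference of the two sides equals $\rho$ times the path-graph Laplacian on $n$ vertices with its last diagonal entry raised from $1$ to $2-\rho\ge1$, which is positive semidefinite. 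Combining, $\sup_x\|JDJ^\top\|\le\tfrac{1}{1-\rho^2}\cdot\tfrac{2(2-\alpha\eta)}{\alpha}=\tfrac{2}{\alpha^2\eta}$, which is the claim.

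I expect step (ii) to be the main obstacle: the naive bounds---the triangle inequality applied directly to $\|JDJ^\top\|$, or treating the ``initial'' weight $\kappa_{m+1}$ and the ``noise'' weights $2\eta$ separately---lose a constant factor, because the large eigendirections of $T$ are delocalized and hence nearly orthogonal to the single coordinate carrying the enlarged weight. The Loewner-order argument via $T^{-1}$ and the path Laplacian is what exploits this orthogonality and produces the sharp constant $2/(\alpha^2\eta)$; for a Gaussian potential the inequalities above become equalities as $n\to\infty$ (the joint law being a stationary Gaussian AR(1) whose covariance has operator norm tending to $2/(\alpha^2\eta)$), which shows the constant cannot be improved.
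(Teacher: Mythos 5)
Your argument is correct, and it is genuinely different from the paper's. The paper realises $(X_{m+1},\ldots,X_{m+n})$ as a Lipschitz image of the \emph{entire} input $(\tilde X_0,Z_1,\ldots,Z_{m+n})$ with $\tilde X_0=\kappa_0^{-1/2}X_0$, so every weight in the resulting diagonal matrix $D$ is $1$ and the Lipschitz constant is controlled directly by a weighted Cauchy--Schwarz (Schur-test) estimate; the burn-in hypothesis (c) is used exactly so that the contribution of $\tilde X_0$, after being damped by $m{+}1$ contraction steps, is comparable to a single noise weight $2\eta$, at which point the geometric Schur sums already give the sharp $2/(\alpha^2\eta)$. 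You instead re-root the chain at $X_{m+1}$, which makes the first weight $\kappa_{m+1}\asymp 2/\alpha$ --- about $1/(\alpha\eta)$ times larger than the noise weights --- and then, as you correctly note, a direct Schur/triangle estimate on $\|J D J^\top\|$ would lose a factor of order $(\alpha\eta)^{-1/2}$ on the first row. Your remedy, passing to the Loewner inequality $L^\top L\preceq \frac{1}{1-\rho^2}T$ (the excess being the rank-one matrix $\frac{1}{1-\rho^2}ww^\top$ with $w_i=\rho^{n+1-i}$) and then certifying $T^{-1}\succeq\frac{\alpha}{2(2-\alpha\eta)}D_0$ via the explicit tridiagonal $T^{-1}$ and a path-graph Laplacian, is correct and does recover the sharp constant, because $T^{-1}$ places the smaller diagonal entry $\frac{1}{1-\rho^2}$ precisely at the coordinate carrying the enlarged weight. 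Your Step (i) is a re-derivation of the paper's Lemma~\ref{lem:vem19}; your algebra $\kappa_{m+1}\le 2\eta(1-\alpha\eta)+\frac{2}{\alpha(2-\alpha\eta)}\le 2/\alpha$ checks out. Net assessment: both proofs are valid; the paper's is shorter because its choice of decomposition keeps all weights on the same scale and lets the Schur test be sharp without further work, while yours isolates and explains exactly why a large initial LSI constant at $X_{m+1}$ does not spoil the joint constant --- namely that the top eigendirection of the Kac--Murdock--Szeg\H{o} matrix is delocalised. Your comment that ``the naive bounds lose a constant factor'' applies to your decomposition but not to the paper's, and your claim of tightness via the stationary Gaussian AR(1) matches the discussion after Proposition~\ref{prop:lsi}.
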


Regarding the condition (c), if $\kappa_{0}\le 2\eta$, it is satisfied for any $m\ge0$.
In particular, cold start (i.e., $X_{0}=x_{0}$ for any $x_{0}\in\R^{d}$ with probability 1) satisfies the condition since $\kappa_{0}=0$, and thus we can take $m=0$.

Let us argue the tightness of the estimate $2/(\alpha^{2}\eta)$.
Exemplify a $1$-dimensional stationary Gaussian AR(1) process $Y_{i+1}=(1-\alpha\eta)Y_{i}+\sqrt{2\eta}Z_{i+1}$.
$Y_{i}$ has the autocovariance matrix whose largest eigenvalue approaches to $2/(\alpha^{2}\eta)$ \citep[e.g.,][]{sherman2023eigenstructure}, which is a log-Sobolev constant that matches our estimate.
The log-Sobolev constant of the distribution of $Y_{0}$ is $2/\alpha$ \citep[Theorem 8 of][arXiv v4]{vempala2019rapid}, and the condition (c) is also satisfied for some finite $m\ge0$.
Since the log-Sobolev constant of the joint distribution of $(Y_{1},\ldots,Y_{n})$ is equal to that of $(Y_{m+1},\ldots,Y_{m+n})$ under stationarity, our estimate is tight.

\begin{remark}
    Let us discuss the effect of burn-in period $m$.
    A typical motivation of introducing burn-in period is to reduce the bias by non-stationarity in \eqref{eq:bvd}, which results from the initial distribution largely deviating from the invariant distribution $\pi_{\eta}$.
    Moreover, Proposition \ref{prop:lsi} and its proof indicate that burn-in also helps to mitigate the effect of the initial distribution in terms of the log-Sobolev constant.
\end{remark}

\begin{proof}
Let us define a scaled version of $X_{0}$ and its realization $x_{0}$ as $\tX_{0}=\sqrt{1/\kappa_{0}}X_{0}$ and $\tx_{0}=\sqrt{1/\kappa_{0}}x_{0}$.
Then, the distribution of $\tX_{0}$ satisfies a log-Sobolev inequality with constant $1$.

It is sufficient to show that $(X_{m+1},X_{1},\ldots,X_{m+n})=g(\tX_{0},Z_{1},\ldots,Z_{m+n})$ for a map $g:\R^{d(m+n+1)}\to\R^{dn}$ with $|g(\tx_{0},z_{1},\ldots,z_{n})-g(\tx_{0}',z_{1}',\ldots,z_{n}')|^{2}\le 2/(\alpha^{2}\eta)$ for arbitrary $\tx_{0},\tx_{0}',z_{i},z_{i}'\in\R^{d}$ due to the Gaussian log-Sobolev inequality and tensorization argument \citep{bakry2014analysis}.

We consider a representation $X_{i}$ for $i=m+1,\ldots,m+n$ such as 
\begin{align*}
    X_{i}(\tX_{0},Z_{1},\ldots,Z_{i})&=X_{i}(\tx_{0},z_{1},\ldots,z_{i})|_{\tx_{0}=\tX_{0},z_{j}=Z_{j},j\in[i]}\\
    &=g_{i}(\tx_{0},z_{1},\ldots,z_{m+n})|_{\tx_{0}=\tX_{0},z_{j}=Z_{j},j\in[m+n]}
\end{align*}
and examine its Lipschitz property.
It is obvious that $\nabla_{z_{j}}g_{i}=O$ for any $j> i$, $\nabla_{z_{i}}g_{i}=\sqrt{2\eta}I_{d}$ for any $i=m+1,\ldots,m+n$.
For $j<i$, $\|\nabla_{z_{j}} g_{i}\|\le (1-\alpha\eta)^{i-j}\sqrt{2\eta}$ since for any fixed vectors $z_{j},z_{j}'\in\R^{d}$,
\begin{align*}
    &|g_{i}(\tx_{0},z_{1},\ldots,z_{j-1},z_{j},z_{j+1},\ldots,z_{m+n})-g_{i}(\tx_{0},z_{1},\ldots,z_{j-1},z_{j}',z_{j+1},\ldots,z_{m+n})|\\
    &=|X_{i}(\tx_{0},z_{1},\ldots,z_{j},\ldots,z_{i})-X_{i}(\tx_{0},z_{1},\ldots,z_{j}',\ldots,z_{i})|\\
    &=\left|\left(X_{i-1}(\tx_{0},z_{1},\ldots,z_{j},\ldots,z_{i-1})-\eta\nabla f(X_{i-1}(\tx_{0},z_{1},\ldots,z_{j},\ldots,z_{i-1}))\right)\right.\\
    &\quad\left.-\left(X_{i-1}(\tx_{0},z_{1},\ldots,z_{j}',\ldots,z_{i-1})-\eta\nabla f(X_{i-1}(\tx_{0},z_{1},\ldots,z_{j}',\ldots,z_{i-1}))\right)\right|\\
    &\le (1-\alpha\eta)|X_{i-1}(\tx_{0},z_{1},\ldots,z_{j},\ldots,z_{i-1})-X_{i-1}(\tx_{0},z_{1},\ldots,z_{j}',\ldots,z_{i-1})|\\
    &\le (1-\alpha\eta)^{i-j}|X_{j}(\tx_{0},z_{1},\ldots,z_{j-1})-X_{j}(\tx_{0},z_{1},\ldots,z_{j-1}')|\\
    &= \sqrt{2\eta}(1-\alpha\eta)^{i-j}|z_{j}-z_{j}'|,
\end{align*}
where we used the fact that if $\eta\le 1/\beta$ and $f$ is $\alpha$-convex, for any $x,x'\in\R^{d}$, $|x-\eta\nabla f(x)-x'+\eta\nabla f(x')|\le (1-\alpha\eta)|x-x'|$.
Similarly, we have that for any $\tx_{0},\tx_{0}'$,
\begin{equation*}
    |g_{i}(\tx_{0},z_{1},\ldots,z_{m+n})-g_{i}(\tx_{0}',z_{1},\ldots,z_{m+n})|\le \sqrt{\kappa_{0}}(1-\alpha\eta)^{i}|\tx_{0}-\tx_{0}'|.
\end{equation*}

We now evaluate the Lipschitz constant of $g$.
For arbitrary $\tx_{0},\tx_{0}',z_{j},z_{j}'\in\R^{d}$,
\begin{align*}
    &|g(\tx_{0},z_{1},\ldots,z_{m+n})-g(\tx_{0}',z_{1}',\ldots,z_{m+n}')|^{2}\\
    &=\sum_{i=m+1}^{m+n}|g_{i}(\tx_{0},z_{1},\ldots,z_{m+n})-g_{i}(\tx_{0}',z_{1}',\ldots,z_{m+n}')|^{2}\\
    &\le \sum_{i=m+1}^{m+n}\left((1-\alpha\eta)^{i}\sqrt{\kappa_{0}}|\tx_{0}-\tx_{0}'|+\sum_{j=1}^{i}(1-\alpha\eta)^{i-j}\sqrt{2\eta}|z_{j}-z_{j}'|\right)^{2}
    \\
    &\le \sum_{i=m+1}^{m+n}\left(\sum_{j=0}^{i}(1-\alpha\eta)^{i-j}\right)\left(\kappa_{0}(1-\alpha\eta)^{i}|\tx_{0}-\tx_{0}'|^{2}+(2\eta)\sum_{j=1}^{i}(1-\alpha\eta)^{i-j}|z_{j}-z_{j}'|^{2}\right)\\
    &\le \frac{1}{\alpha\eta}\sum_{i=m+1}^{m+n}\left(\kappa_{0}(1-\alpha\eta)^{i}|\tx_{0}-\tx_{0}'|^{2}+(2\eta)\sum_{j=1}^{i}(1-\alpha\eta)^{i-j}|z_{j}-z_{j}'|^{2}\right)\\
    &\le \frac{\kappa_{0}}{\alpha\eta}\sum_{i=m+1}^{m+n}(1-\alpha\eta)^{i}|\tx_{0}-\tx_{0}'|^{2}+\frac{2}{\alpha}\sum_{i=m+1}^{m+n}\sum_{j=1}^{i}(1-\alpha\eta)^{i-j}|z_{j}-z_{j}'|^{2}\\
    &\le \frac{\kappa_{0}}{\alpha^{2}\eta^{2}}(1-\alpha\eta)^{m+1}|\tx_{0}-\tx_{0}'|^{2}+\frac{2}{\alpha}\sum_{j=1}^{m+n}\sum_{i=j\vee(m+1)}^{m+n}(1-\alpha\eta)^{i-j}|z_{j}-z_{j}'|^{2}\\
    &\le \frac{2}{\alpha^{2}\eta}\left(|\tx_{0}-\tx_{0}'|^{2}+\sum_{j=1}^{m+n}|z_{j}-z_{j}'|^{2}\right),
\end{align*}
where we used $\eta\le 1/\beta\le 1/\alpha$ and the fact $(\kappa_{0}/(\alpha^{2}\eta^{2}))(1-\alpha\eta)^{m+1}\le 2/(\alpha^{2}\eta)$ since if $\kappa_{0}>2\eta$,
\begin{equation*}
    (1-\alpha\eta)^{m+1}\le \frac{2\eta}{\kappa_{0}}\iff m\ge \frac{\log(\kappa_{0}/(2\eta))}{\log(1/(1-\alpha\eta))}-1,
\end{equation*}
and if $\kappa_{0}\le 2\eta$, $(\kappa_{0}/(\alpha^{2}\eta^{2}))(1-\alpha\eta)^{m+1}\le 2/(\alpha^{2}\eta)$ for any $m\ge0$.
This complete the proof.
\end{proof}

\begin{lemma}[Subexponentiality of quadratic forms; \citealp{nakakita2025corrigendum}]\label{lem:nai25}
    Suppose that $A$ is a $dn\times dn $ positive semi-definite matrix and $Y$ is a $dn$-dimensional random vector whose distribution satisfies a log-Sobolev inequality with constant $\kappa$. Then, for any $t\in(-\infty,1/(2\|A\|\kappa))$,
    \begin{equation*}
        \E\left[\exp\left(tY^{\top}AY\right)\right]\le \exp\left(\left(t+\frac{2\|A\|\kappa t^{2}}{1-2\|A\|\kappa t}\right)\E[Y^{\top}AY]\right).
    \end{equation*}
\end{lemma}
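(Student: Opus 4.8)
The plan is to run a Herbst-type argument directly on the quadratic form $Q:=Y^{\top}AY$, exploiting that $|\nabla Q|^{2}$ is controlled by $Q$ itself; this is the mechanism that upgrades the Gaussian-type concentration coming from a log-Sobolev inequality to the sub-exponential tail we need. The one algebraic input is that $\nabla Q=2AY$, so that $|\nabla Q|^{2}=4Y^{\top}A^{2}Y\le 4\|A\|\,Y^{\top}AY=4\|A\|\,Q$ by $A^{2}\preceq\|A\|A$ for positive semi-definite $A$. If $\E[Q]=0$ then $Q\equiv 0$ almost surely and the bound is trivial, so assume $\E[Q]>0$ throughout.

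Before differentiating I would check that $H(t):=\E[e^{tQ}]$ is finite for every $t\in(-\infty,1/(2\|A\|\kappa))$, which also licenses differentiation under the integral sign there. For $t\le 0$ this is immediate since $Q\ge 0$. For $0<t<1/(2\|A\|\kappa)$ it follows because $\sqrt{Q}=|A^{1/2}Y|$ is $\|A\|^{1/2}$-Lipschitz, so LSI($\kappa$) gives the sub-Gaussian tail $\Pr(\sqrt{Q}\ge\E[\sqrt{Q}]+r)\le e^{-r^{2}/(2\|A\|\kappa)}$ by the standard Herbst argument, hence $\Pr(Q\ge s)\le e^{-(\sqrt{s}-\E[\sqrt{Q}])^{2}/(2\|A\|\kappa)}$ for large $s$, whose integral against $e^{ts}$ converges exactly when $t<1/(2\|A\|\kappa)$.

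The core step is to apply LSI($\kappa$) with $g=e^{tQ/2}$. The left-hand side then equals $\E[tQe^{tQ}]-H(t)\log H(t)=tH'(t)-H(t)\log H(t)$, while $|\nabla g|^{2}=\tfrac{t^{2}}{4}e^{tQ}|\nabla Q|^{2}\le\|A\|t^{2}e^{tQ}Q$ and $H'(t)=\E[Qe^{tQ}]$ bound the right-hand side $2\kappa\,\E[|\nabla g(Y)|^{2}]$ by $2\|A\|\kappa t^{2}H'(t)$, so that
\[
tH'(t)-H(t)\log H(t)\le 2\|A\|\kappa t^{2}H'(t).
\]
Put $K(t):=t^{-1}\log H(t)$; it extends continuously to $K(0)=\E[Q]$ by L'Hôpital (as $H(0)=1$), and $K(t)>0$ for all $t\neq0$ since $H(t)\ge e^{t\E[Q]}$ for $t>0$ and $0<H(t)\le1$ for $t<0$. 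Dividing the displayed inequality by $t^{2}H(t)$ and using $(\log H)'=K+tK'$ turns it into the first-order linear differential inequality $(1-2\|A\|\kappa t)K'(t)\le 2\|A\|\kappa K(t)$. For $t\in(0,1/(2\|A\|\kappa))$, dividing by $K(t)>0$ and integrating $(\log K)'(s)\le 2\|A\|\kappa/(1-2\|A\|\kappa s)$ from $0$ to $t$ yields $K(t)\le\E[Q]/(1-2\|A\|\kappa t)$, hence $\log H(t)=tK(t)\le t\E[Q]/(1-2\|A\|\kappa t)$; for $t<0$ the same differential inequality integrated over $(t,0)$ gives $K(t)\ge\E[Q]/(1-2\|A\|\kappa t)$, and multiplying by $t<0$ again yields $\log H(t)\le t\E[Q]/(1-2\|A\|\kappa t)$. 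Exponentiating and using the identity $t/(1-2\|A\|\kappa t)=t+2\|A\|\kappa t^{2}/(1-2\|A\|\kappa t)$ gives the claimed bound.

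The only delicate point is the bookkeeping around $t=0$: the substitution $K=t^{-1}\log H$ is singular there, so one must justify the value $K(0)=\E[Q]$, the strict positivity of $K$ (so that $\log K$ is differentiable), and, for $t<0$, the sign reversal that turns the lower bound on $K(t)$ into the desired upper bound on $\log H(t)$. Everything of substance — the passage from Gaussian to sub-exponential integrability and the pole at $t=1/(2\|A\|\kappa)$ — is carried by the self-bounding inequality $|\nabla Q|^{2}\le 4\|A\|Q$; granted that, the remainder is one application of the log-Sobolev inequality and the integration of a linear differential inequality, both routine.
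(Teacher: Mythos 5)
Your proof is correct. Since the paper imports this lemma by citation from \citet{nakakita2025corrigendum} and does not reproduce a proof, there is nothing in the paper to compare against line by line; but the argument you give is the standard Herbst/entropy route, which is the natural (and essentially the only) way to prove such a statement from a log-Sobolev inequality, and matches the approach in the cited reference.

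To recap why it works: the whole proof hinges on the self-bounding identity $|\nabla Q|^2 = 4 Y^{\top}A^2 Y \le 4\|A\|\,Q$, which is what converts the Gaussian-type LSI into a sub-exponential moment bound with the pole at $t = 1/(2\|A\|\kappa)$. Feeding $g = e^{tQ/2}$ into LSI gives $tH'(t) - H(t)\log H(t) \le 2\|A\|\kappa t^2 H'(t)$, and after the substitution $K(t) = t^{-1}\log H(t)$ (division by $t^2 H(t)>0$ keeps the inequality direction for both signs of $t$) one gets $(1-2\|A\|\kappa t)K'(t) \le 2\|A\|\kappa K(t)$, which integrates to $K(t) \le \E[Q]/(1-2\|A\|\kappa t)$ on $(0, 1/(2\|A\|\kappa))$ and $K(t) \ge \E[Q]/(1-2\|A\|\kappa t)$ on $(-\infty,0)$; multiplying by $t$ (and flipping the sign for $t<0$) gives $\log H(t) \le t\E[Q]/(1-2\|A\|\kappa t)$, which matches the claimed bound after the algebraic identity $t/(1-2\|A\|\kappa t) = t + 2\|A\|\kappa t^2/(1-2\|A\|\kappa t)$. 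You correctly flagged the regularity bookkeeping (finiteness and differentiability of $H$ on the open interval, continuity of $K$ at $0$ with $K(0)=\E[Q]$, strict positivity of $K$ so $\log K$ makes sense, the trivial case $\E[Q]=0$); all of these are routine given the sub-Gaussian tail of $\sqrt{Q}=|A^{1/2}Y|$, which is $\|A\|^{1/2}$-Lipschitz.
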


The next lemma is inspired by Theorem 8 of \citet[arXiv v4]{vempala2019rapid}.
\begin{lemma}[LSI for marginal distributions]\label{lem:vem19}
    Suppose that Assumption~\ref{asmp:potential} holds and the distribution of $X_{0}$ satisfies a log-Sobolev inequality with constant $\kappa_{0}$. For any $\eta\in(0,1/\beta)$ and $i\ge 0$, the distribution of $X_{i}$ satisfies a log-Sobolev inequality with constant $(1-\alpha\eta)^{2i}\kappa_{0}+(1-(1-\alpha\eta)^{2i})(2/\alpha)$.
    In particular, if $i\ge \frac{\log(\alpha\kappa_{0})}{2\log(1/(1-\alpha\eta))}$ (with convention $\log0=-\infty$), then the distribution of $X_{i}$ satisfies a log-Sobolev inequality with constant $3/\alpha$.
\end{lemma}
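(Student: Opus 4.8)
The plan is to propagate the log-Sobolev inequality through a single ULA step and then iterate the resulting one-step bound. Write $T_\eta(x):=x-\eta\nabla f(x)$, so that $X_i=T_\eta(X_{i-1})+\sqrt{2\eta}Z_i$ with $Z_i\sim\calN_d(0,I_d)$ independent of $X_{i-1}$. Under Assumption~\ref{asmp:potential} with $\eta\in(0,1/\beta)$ one has $\alpha\eta\le\beta\eta<1$, and the map $T_\eta$ is $(1-\alpha\eta)$-Lipschitz; this is exactly the contraction estimate $|x-\eta\nabla f(x)-x'+\eta\nabla f(x')|\le(1-\alpha\eta)|x-x'|$ invoked in the proof of Proposition~\ref{prop:lsi}. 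By the Lipschitz-pushforward property of log-Sobolev inequalities, if the distribution of $X_{i-1}$ satisfies LSI($\kappa_{i-1}$) then the distribution of $T_\eta(X_{i-1})$ satisfies LSI($(1-\alpha\eta)^2\kappa_{i-1}$). Moreover $\sqrt{2\eta}Z_i\sim\calN_d(0,2\eta I_d)$ satisfies LSI($2\eta$), being a rescaling of the standard Gaussian. Since $X_i$ is the sum of these two independent random vectors and the convolution of an LSI($C_1$) distribution with an LSI($C_2$) distribution satisfies LSI($C_1+C_2$) (a consequence of subadditivity of entropy; see \citet{bakry2014analysis,chewi2023log}), the distribution of $X_i$ satisfies LSI($\kappa_i$) with $\kappa_i:=(1-\alpha\eta)^2\kappa_{i-1}+2\eta$.

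Unrolling this recursion from the base case LSI($\kappa_0$) for $X_0$ gives $\kappa_i=(1-\alpha\eta)^{2i}\kappa_0+2\eta\sum_{j=0}^{i-1}(1-\alpha\eta)^{2j}$. The geometric sum equals $2\eta\,(1-(1-\alpha\eta)^{2i})/(1-(1-\alpha\eta)^2)$, and since $1-(1-\alpha\eta)^2=\alpha\eta(2-\alpha\eta)\ge\alpha\eta$ (using $\alpha\eta<1$), we get $2\eta/(1-(1-\alpha\eta)^2)\le 2/\alpha$. Because $1-(1-\alpha\eta)^{2i}\ge0$, this yields $\kappa_i\le(1-\alpha\eta)^{2i}\kappa_0+(1-(1-\alpha\eta)^{2i})\,(2/\alpha)$, which is precisely the claimed log-Sobolev constant.

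For the $3/\alpha$ bound asserted under the extra hypothesis on $i$, bound further $\kappa_i\le(1-\alpha\eta)^{2i}\kappa_0+2/\alpha$; it then suffices that $(1-\alpha\eta)^{2i}\kappa_0\le 1/\alpha$, i.e.\ $(1-\alpha\eta)^{2i}\le 1/(\alpha\kappa_0)$. If $\alpha\kappa_0\le 1$ (in particular $\kappa_0=0$, with the convention $\log 0=-\infty$), this holds for every $i\ge0$; otherwise, taking logarithms and using $\log(1-\alpha\eta)<0$ shows it is equivalent to $i\ge\log(\alpha\kappa_0)/(2\log(1/(1-\alpha\eta)))$, as stated.

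As for difficulty, there is essentially no obstacle: every ingredient — the contraction bound for $T_\eta$, the Lipschitz-pushforward and convolution rules for LSI, and the elementary geometric-series estimate — is either already used in the paper or entirely standard. The only points that require a little care are obtaining the exponent $2i$ rather than $i$ (it arises because a Lipschitz constant enters the LSI constant squared) and verifying the clean bound $2\eta/(1-(1-\alpha\eta)^2)\le 2/\alpha$.
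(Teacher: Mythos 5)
Your proof is correct and follows essentially the same route as the paper: the $(1-\alpha\eta)$-Lipschitz contraction of the ULA drift map, the Gaussian LSI for the noise, the additivity of LSI constants under independent convolution, and iteration of the resulting one-step recursion $\kappa_i \le (1-\alpha\eta)^2\kappa_{i-1}+2\eta$. The only difference is cosmetic: you unroll the recursion and bound the geometric sum explicitly via $1-(1-\alpha\eta)^2=\alpha\eta(2-\alpha\eta)\ge\alpha\eta$, whereas the paper states the iterated bound directly in one display.
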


\begin{proof}
    Since $\eta\le 1/\beta$, $x-\eta\nabla f(x)$ is $(1-\alpha\eta)$-Lipschitz.
    Let $\alpha$ denote a log-Sobolev constant of the distribution of $X_{i}$.
    Also note the following fact \citep{bakry2014analysis}: if two random variables $U$ and $V$ are independent and their distributions satisfy log-Sobolev inequalities with constants $\kappa_{U}$ and $\kappa_{V}$, respectively, then the distribution of $U+V$ satisfies a log-Sobolev inequality with constant $\kappa_{U}+\kappa_{V}$.
    By the Gaussian log-Sobolev inequality and the Lipschitz property, for any $i\ge0$,
    \begin{equation*}
        \alpha_{i+1}\le (1-\alpha\eta)^{2}\alpha_{i}+2\eta\le (1-\alpha\eta)^{2(i+1)}\kappa_{0}+(1-(1-\alpha\eta)^{2(i+1)})(2/\alpha),
    \end{equation*}
    which is the desired result.
\end{proof}

Let us recall the definition of a Poincar\'{e} inequality. 
A probability distribution $\nu$ on $\R^{d}$ is said to satisfy a Poincar\'{e} inequality with constant $C\ge 0$ (denoted by PI($C$)) if for any smooth function $g:\R^{d}\to\R$ with $\int_{\R^{d}}|g(x)|^{2}\mu(\diff x)<\infty$,
\begin{equation*}
    \int_{\R^{d}}g^{2}(x)\nu(\diff x)-\left(\int_{\R^{d}}g(x)\nu(\diff x)\right)^{2}\le C\int_{\R^{d}}|\nabla g(x)|^{2}\nu(\diff x).
\end{equation*}
Note that if a distribution satisfies a log-Sobolev inequality with constant $C$, it also satisfies a Poincar\'{e} inequality with constant $C$ \citep{bakry2014analysis}.
\begin{lemma}[Bound on the spectral norm of a covariance]\label{lem:pi}
    For any $d$-dimensional random variable $\{Y_{i}\}_{i=1}^{n}$ such that that the joint probability distribution of $(Y_{1},\ldots,Y_{d})$ on $\R^{dn}$ satisfies a Poincar\'{e} inequality with constant $C>0$, it holds that
    \begin{equation*}
        \left\|\E\left[\left(\frac{1}{n}\sum_{i=1}^{n}Y_{i}\right)\left(\frac{1}{n}\sum_{i=1}^{n}Y_{i}\right)^{\top}-\E\left[\frac{1}{n}\sum_{i=1}^{n}Y_{i}\right]\E\left[\frac{1}{n}\sum_{i=1}^{n}Y_{i}\right]^{T}\right]\right\|\le \frac{C}{n}.
    \end{equation*}
\end{lemma}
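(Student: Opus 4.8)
The plan is to reduce the operator-norm estimate to a one-dimensional variance bound and then apply the Poincar\'{e} inequality to a single linear test function. Write $\bar Y_n := \frac{1}{n}\sum_{i=1}^{n} Y_i$, so that the matrix appearing in the statement is exactly the covariance matrix $\Cov(\bar Y_n)$. Being symmetric and positive semi-definite, it satisfies $\|\Cov(\bar Y_n)\| = \sup_{v\in\R^{d},\,|v|=1} v^{\top}\Cov(\bar Y_n)v = \sup_{|v|=1}\mathrm{Var}(v^{\top}\bar Y_n)$, so it suffices to bound $\mathrm{Var}(v^{\top}\bar Y_n)$ uniformly over unit vectors $v$.

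Next I would fix a unit vector $v\in\R^{d}$ and introduce the map $g:\R^{dn}\to\R$, $g(y_1,\ldots,y_n):=\frac{1}{n}\sum_{i=1}^{n}v^{\top}y_i$. This is linear, hence smooth, and its gradient in the $i$-th block of coordinates is $\nabla_{y_i}g=\frac{1}{n}v$, so that $|\nabla g(y)|^{2}=\sum_{i=1}^{n}\frac{1}{n^{2}}|v|^{2}=\frac{1}{n}$ identically, using $|v|=1$. Applying PI($C$) for the joint law of $(Y_1,\ldots,Y_n)$ to $g$ then gives
\[
\mathrm{Var}\!\left(v^{\top}\bar Y_n\right)=\mathrm{Var}\!\big(g(Y_1,\ldots,Y_n)\big)\le C\,\E\big[|\nabla g|^{2}\big]=\frac{C}{n}.
\]
Taking the supremum over $|v|=1$ yields $\|\Cov(\bar Y_n)\|\le C/n$, which is the claim.

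I do not anticipate a genuine obstacle here: the argument is a direct application of the defining inequality, the only nontrivial input being the correct choice of test function and the observation that its gradient norm is the constant $1/n$. The single point needing a line of justification is that $g$ is an admissible test function, i.e.\ that $v^{\top}\bar Y_n$ is square-integrable; this follows by first applying PI($C$) to the coordinate projections $y\mapsto(y_i)_j$ to conclude that each $Y_i$ has finite second moment. (The same computation in fact shows $\|\Cov(B\bar Y_n)\|\le C\|B\|^{2}/n$ for any fixed matrix $B$, but only the stated case is used in the sequel.)
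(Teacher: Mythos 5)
Your argument is correct and matches the paper's proof: both reduce $\|\Cov(\bar Y_n)\|$ to $\sup_{|v|=1}\mathrm{Var}(v^{\top}\bar Y_n)$, apply PI($C$) to the linear test function $g(y_1,\ldots,y_n)=\frac{1}{n}\sum_i v^{\top}y_i$, and use $|\nabla g|^2=1/n$. You spell out the variance reduction and the square-integrability point a bit more explicitly, but the underlying argument is the same.
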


\begin{proof}
    It sufficient to show that for any $u\in\bbS^{d-1}$, $g_{u}:\R^{dn}\to\R$ such that $g_{u}(y_{1},\ldots,y_{n})=(\frac{1}{n}\sum_{i=1}^{n}u^{\top}y_{i})$ for $y_{i}\in\R^{d}$ satisfies $|\nabla g_{u}|\le 1/\sqrt{n}$.
    Indeed, for any $j\in[n]$,\begin{equation*}
        \nabla_{y_{j}}g_{u}(y_{1},\ldots,y_{n})=\frac{1}{n}u,
    \end{equation*}
    and thus $|\nabla g_{u}(y_{1},\ldots,y_{n})|^{2}=\sum_{j=1}^{n}|\nabla_{y_{j}}g_{u}(y_{1},\ldots,y_{n})|^{2}=1/n$.
    This complete the proof.
\end{proof}

Before the proof of Proposition \ref{prop:comp:ula}, let us prepare notation.
Let the Markov kernel $R_{\eta}$ of ULA with step size $\eta$ such that for all $A\in\calB(\R^{d})$ and $x\in\R^{d}$ by 
\begin{equation}\label{eq:def:kernel}
    R_{\eta}(x,A)=\int_{A}\frac{1}{(4\pi\eta)^{d/2}}\exp\left(-\frac{1}{4\eta}\left|y-x+\eta\nabla f(x)\right|^{2}\right)\diff y.
\end{equation}
$\delta_{x}R_{\eta}^{i}$ denotes the distribution of $X_{i}$ given $X_{0}=x$ as usual.

\begin{proposition}[Bounds on moments, \citealp{durmus2019high}]\label{prop:moment}
    Suppose that Assumption~\ref{asmp:potential} holds. 
    The following holds:
    \begin{enumerate}
        \item[(a)] $\int_{\R^{d}}|x-x^{\ast}|^{2}\pi(\diff x)\le d/\alpha$.
        \item[(b)] For any $\eta \in (0, 1/\beta)$, $R_{\eta}$ has a unique stationary distribution $\pi_{\eta}$, and it holds that $\int_{\R^{d}}|x - x^{\ast}|^{2} \pi_{\eta}(\diff x) \le 2d/\alpha$.
        \item[(c)] For any $\eta \in (0, 1/\beta)$ and $i\in\N$, $\int_{\R^{d}}|y - x^{\ast}|^{2} \delta_{x}R_{\eta}^{i}(\diff y) \le (1-\alpha\eta)^{i}|x-x^{\ast}|^{2}+2(d/\alpha)(1-(1-\alpha\eta)^{i})$.
    \end{enumerate}
    
\end{proposition}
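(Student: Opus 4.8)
The plan is to prove (c) first, as it is the workhorse; (b) then follows from (c) together with a contraction argument, and (a) is a short independent integration-by-parts computation. For (c), write $T(y)=y-\eta\nabla f(y)$, so that one ULA step reads $X_i=T(X_{i-1})+\sqrt{2\eta}Z_i$. Since $\eta\le 1/\beta$ and $f$ is $\alpha$-strongly convex, $T$ is $(1-\alpha\eta)$-Lipschitz (the fact already used in the proof of Proposition~\ref{prop:lsi}), and $\nabla f(x^{\ast})=0$ gives $T(x^{\ast})=x^{\ast}$, hence $|T(y)-x^{\ast}|\le(1-\alpha\eta)|y-x^{\ast}|$. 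Conditioning on $X_{i-1}$ and using $\E[Z_i]=0$, $\E[|Z_i|^2]=d$,
\[
\E\!\left[|X_i-x^{\ast}|^2\mid X_{i-1}\right]=|T(X_{i-1})-x^{\ast}|^2+2\eta d\le(1-\alpha\eta)^2|X_{i-1}-x^{\ast}|^2+2\eta d.
\]
Taking full expectations and using $(1-\alpha\eta)^2\le 1-\alpha\eta$ (valid because $\alpha\eta\in(0,1)$ under Assumption~\ref{asmp:potential} with $\eta<1/\beta$), the sequence $u_i:=\int_{\R^d}|y-x^{\ast}|^2\,\delta_x R_\eta^i(\diff y)$ obeys $u_i\le(1-\alpha\eta)u_{i-1}+2\eta d$ with $u_0=|x-x^{\ast}|^2$; unrolling this linear recursion and summing $\sum_{j=0}^{i-1}(1-\alpha\eta)^j=(1-(1-\alpha\eta)^i)/(\alpha\eta)$ produces exactly the bound in (c).

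For (b), I would first show $R_\eta$ is a strict contraction on $(\mathcal{P}_2(\R^d),W_2)$: given laws $\mu,\nu$, take an optimal coupling $(\xi,\xi')$ of $(\mu,\nu)$ and run one step synchronously (same Gaussian increment $Z$); the noise cancels, and Lipschitzness of $T$ gives $W_2(\mu R_\eta,\nu R_\eta)\le(1-\alpha\eta)W_2(\mu,\nu)$. Banach's fixed point theorem on the complete space $(\mathcal{P}_2(\R^d),W_2)$ then yields a unique stationary $\pi_\eta\in\mathcal{P}_2(\R^d)$, with $\delta_x R_\eta^i\to\pi_\eta$ in $W_2$ for every $x$. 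Since $W_2$-convergence forces convergence of second moments, letting $i\to\infty$ in (c) gives $\int_{\R^d}|y-x^{\ast}|^2\pi_\eta(\diff y)=\lim_i u_i\le 2d/\alpha$; equivalently, apply the one-step inequality with $X_{i-1}\sim\pi_\eta$ to the (finite, since $\pi_\eta\in\mathcal{P}_2$) stationary second moment $u$ to get $u\le(1-\alpha\eta)u+2\eta d$ and rearrange.

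For (a), I would integrate by parts: strong convexity of $f$ makes $e^{-f}$ and $\nabla e^{-f}$ decay fast enough at infinity for boundary terms to vanish, so
\[
\int_{\R^d}\langle\nabla f(x),x-x^{\ast}\rangle e^{-f(x)}\diff x=-\int_{\R^d}\langle x-x^{\ast},\nabla e^{-f(x)}\rangle\diff x=\int_{\R^d}\mathrm{div}(x-x^{\ast})\,e^{-f(x)}\diff x=d\int_{\R^d}e^{-f(x)}\diff x,
\]
which after normalization reads $\E_\pi[\langle\nabla f(X),X-x^{\ast}\rangle]=d$. Strong convexity and $\nabla f(x^{\ast})=0$ give $\langle\nabla f(x),x-x^{\ast}\rangle\ge\alpha|x-x^{\ast}|^2$ pointwise, hence $d\ge\alpha\,\E_\pi[|X-x^{\ast}|^2]$, which is (a).

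The main obstacle I anticipate is the limiting step in (b): one must check carefully that $\pi_\eta$ genuinely lies in $\mathcal{P}_2(\R^d)$ (so that its stationary second moment is finite and the recursion is not vacuous) and that passing to the $W_2$-limit commutes with taking second moments. The contraction/Banach argument is what makes $\pi_\eta\in\mathcal{P}_2$ available; the direct stationary-recursion phrasing avoids the explicit interchange but still relies on this point. Everything else — the contractivity of $T$ (quoted from earlier), the conditional-expectation identity, the geometric sum, and the integration by parts — is elementary.
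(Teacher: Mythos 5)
Your proof is correct, and you should note that the paper does not supply its own argument here: Proposition~\ref{prop:moment} is imported wholesale from \citet{durmus2019high}, so there is no internal proof to compare against. Your reconstruction is the standard one. For (c), the one-step conditional identity $\E[|X_i-x^{\ast}|^2\mid X_{i-1}]=|T(X_{i-1})-x^{\ast}|^2+2\eta d$, the $(1-\alpha\eta)$-contraction of $T=\mathrm{id}-\eta\nabla f$ (which the paper does establish and use in the proof of Proposition~\ref{prop:lsi}), the weakening $(1-\alpha\eta)^2\le 1-\alpha\eta$, and the geometric unrolling are exactly the ingredients that yield the stated bound with the $(1-\alpha\eta)^i$ exponent rather than $(1-\alpha\eta)^{2i}$. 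For (b), either of your two routes is fine; the synchronous-coupling contraction plus Banach fixed point on $(\mathcal{P}_2,W_2)$ is the cleanest way to get both existence/uniqueness of $\pi_\eta$ in $\mathcal{P}_2$ and the convergence of second moments needed to pass to the limit (recall that $W_2$ convergence is equivalent to weak convergence together with convergence of second moments, so $\int|y-x^{\ast}|^2\,\delta_x R_\eta^i(\diff y)\to\int|y-x^{\ast}|^2\,\pi_\eta(\diff y)$). For (a), the integration by parts giving $\E_\pi[\langle\nabla f(X),X-x^{\ast}\rangle]=d$ combined with $\langle\nabla f(x)-\nabla f(x^{\ast}),x-x^{\ast}\rangle\ge\alpha|x-x^{\ast}|^2$ is the right argument; a Poincar\'e-inequality route would only bound $\mathrm{tr}\,\Cov(\pi)$, which is centred at the mean rather than at $x^{\ast}$ and hence weaker than what is claimed, so the explicit stationarity identity is genuinely needed. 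The one point worth stating more explicitly is the justification of vanishing boundary terms in (a): $\alpha$-strong convexity gives $e^{-f(x)}\le e^{-f(x^{\ast})}e^{-\alpha|x-x^{\ast}|^2/2}$, while $\beta$-smoothness gives $|\nabla f(x)|\le\beta|x-x^{\ast}|$, so both the density and its gradient have Gaussian tails and the Gauss--Green boundary integral over $\partial B_R$ vanishes as $R\to\infty$.
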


\begin{proposition}[Bounds on 2-Wasserstein distances, \citealp{durmus2019high}]\label{prop:wasserstein}
    Suppose that Assumption~\ref{asmp:potential} holds and $\eta\in(0,1/\beta)$.
    \begin{enumerate}
        \item[(a)] $W_{2}^{2}(\pi_{\eta},\pi)\le 4(\beta/\alpha)^{2}\eta(2d + d^{2}\beta^{2}\eta/\alpha + d^{2}\eta^{2}/6)$.
        \item[(b)] For any $i\in\N$, $W_{2}^{2}(\delta_{x}R_{\eta}^{i},\pi_{\eta})\le (1-\alpha\eta)^{i}(|x-x^{\ast}|^{2}+2d/\alpha)$.
    \end{enumerate}
\end{proposition}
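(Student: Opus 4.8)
The plan is to derive both bounds from the two workhorses of ULA error analysis: a synchronous coupling for the contraction of the discrete kernel $R_{\eta}$ (for part (b)), and a one-step comparison of $R_{\eta}$ with the continuous Langevin semigroup $(P_{t})_{t\ge0}$ associated to $\diff Y_{t}=-\nabla f(Y_{t})\diff t+\sqrt{2}\diff B_{t}$ (for part (a)). Two ingredients are used repeatedly. First, since $\eta\le1/\beta$ and $f$ is $\alpha$-convex, the map $x\mapsto x-\eta\nabla f(x)$ is a $(1-\alpha\eta)$-contraction — the inequality already invoked in the proof of Proposition~\ref{prop:lsi}. Second, by the synchronous coupling $\tfrac{\diff}{\diff t}|Y_{t}-Y_{t}'|^{2}=-2\langle Y_{t}-Y_{t}',\nabla f(Y_{t})-\nabla f(Y_{t}')\rangle\le-2\alpha|Y_{t}-Y_{t}'|^{2}$, the semigroup $(P_{t})$ is a $W_{2}$-contraction at rate $e^{-\alpha t}$, so $W_{2}(\nu P_{t},\pi)\le e^{-\alpha t}W_{2}(\nu,\pi)$ for every $\nu$ (using $\pi P_{t}=\pi$).

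For part (b) I would run two copies of ULA on the same Gaussian increments $\{Z_{i}\}$, one from $X_{0}=x$ and one from $\bar{X}_{0}\sim\pi_{\eta}$; since $\pi_{\eta}$ is $R_{\eta}$-stationary, $\bar{X}_{i}\sim\pi_{\eta}$ for every $i$. Under the synchronous coupling the noise cancels, so $|X_{i}-\bar{X}_{i}|\le(1-\alpha\eta)|X_{i-1}-\bar{X}_{i-1}|$, hence $|X_{i}-\bar{X}_{i}|\le(1-\alpha\eta)^{i}|x-\bar{X}_{0}|$ pathwise, and optimising the time-$0$ coupling of $\delta_{x}$ and $\pi_{\eta}$ gives $W_{2}^{2}(\delta_{x}R_{\eta}^{i},\pi_{\eta})\le(1-\alpha\eta)^{2i}\int_{\R^{d}}|x-y|^{2}\pi_{\eta}(\diff y)$. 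Since $\int_{\R^{d}}|x-y|^{2}\pi_{\eta}(\diff y)\le 2|x-x^{\ast}|^{2}+2\int_{\R^{d}}|y-x^{\ast}|^{2}\pi_{\eta}(\diff y)\le 2(|x-x^{\ast}|^{2}+2d/\alpha)$ by Proposition~\ref{prop:moment}(b), and $(1-\alpha\eta)^{2i}\le(1-\alpha\eta)^{i}$, this is already a bound of the advertised form; the precise constant $|x-x^{\ast}|^{2}+2d/\alpha$ is recovered by carrying the $W_{2}$-triangle step more carefully as in \citet{durmus2019high}.

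For part (a) I would fix a law $\nu$ and, on one Brownian motion with $Y_{0}\sim\nu$, run the diffusion $Y_{t}=Y_{0}-\int_{0}^{t}\nabla f(Y_{s})\diff s+\sqrt{2}B_{t}$ and the interpolated one-step ULA process $\tilde{Y}_{t}=Y_{0}-t\nabla f(Y_{0})+\sqrt{2}B_{t}$, so that $\tilde{Y}_{\eta}$ has law $\nu R_{\eta}$ and $Y_{\eta}$ has law $\nu P_{\eta}$. Since $Y_{t}-\tilde{Y}_{t}=-\int_{0}^{t}(\nabla f(Y_{s})-\nabla f(Y_{0}))\diff s$, Cauchy--Schwarz and $\beta$-smoothness give $\E|Y_{\eta}-\tilde{Y}_{\eta}|^{2}\le\beta^{2}\eta\int_{0}^{\eta}\E|Y_{s}-Y_{0}|^{2}\diff s$, and $\E|Y_{s}-Y_{0}|^{2}$ is controlled through the SDE by the squared drift and the Brownian part, with $\E|\nabla f(Y_{r})|^{2}\le\beta^{2}\E|Y_{r}-x^{\ast}|^{2}$ since $\nabla f(x^{\ast})=0$. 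Taking $\nu=\pi_{\eta}$, Itô's formula and $\alpha$-convexity give $\tfrac{\diff}{\diff r}\E|Y_{r}-x^{\ast}|^{2}\le-2\alpha\E|Y_{r}-x^{\ast}|^{2}+2d$, so $\E|Y_{r}-x^{\ast}|^{2}\le\max\{\int_{\R^{d}}|y-x^{\ast}|^{2}\pi_{\eta}(\diff y),\,d/\alpha\}\le 2d/\alpha$ uniformly on $[0,\eta]$ by Proposition~\ref{prop:moment}(b); this makes the one-step error $W_{2}^{2}(\pi_{\eta}R_{\eta},\pi_{\eta}P_{\eta})$ explicit, of order $\eta^{2}$ times a polynomial in $d$, $\beta\eta$ and $1/\alpha$. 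Finally, using $\pi_{\eta}R_{\eta}=\pi_{\eta}$ and the semigroup contraction,
\[
W_{2}(\pi_{\eta},\pi)\le W_{2}(\pi_{\eta}R_{\eta},\pi_{\eta}P_{\eta})+W_{2}(\pi_{\eta}P_{\eta},\pi)\le W_{2}(\pi_{\eta}R_{\eta},\pi_{\eta}P_{\eta})+e^{-\alpha\eta}W_{2}(\pi_{\eta},\pi),
\]
so $W_{2}(\pi_{\eta},\pi)\le W_{2}(\pi_{\eta}R_{\eta},\pi_{\eta}P_{\eta})/(1-e^{-\alpha\eta})$, and $1-e^{-\alpha\eta}\ge\alpha\eta/2$ for $\alpha\eta\le1$ turns this into the stated bound after collecting the explicit $\eta$- and $d$-dependence.

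The step I expect to be the main obstacle is the constant-tracking in part (a): producing exactly the coefficients $2d$, $d^{2}\beta^{2}\eta/\alpha$ and $d^{2}\eta^{2}/6$ requires carrying the squared-drift term — together with the uniform-in-$[0,\eta]$ second-moment control of the non-stationary diffusion started at $\pi_{\eta}$ — carefully through the division by $1-e^{-\alpha\eta}$; part (b) is immediate once the $(1-\alpha\eta)$-contraction and Proposition~\ref{prop:moment}(b) are in hand. Both statements are those of \citet{durmus2019high}, which I would cite directly in the paper; the sketch above indicates how they can be reproved from the ingredients already collected in this appendix.
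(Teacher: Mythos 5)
The paper does not actually prove Proposition~\ref{prop:wasserstein} — it is stated in the appendix preliminaries as an imported result and attributed directly to \citet{durmus2019high}, with no proof supplied (the same is true of Proposition~\ref{prop:moment}). So there is no in-paper argument to compare against, and the appropriate disposition here, which you yourself reach in your final sentence, is simply to cite the source.

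That said, your reconstruction sketch is the standard route and is broadly sound: part~(b) via synchronous coupling of two ULA chains using the $(1-\alpha\eta)$-Lipschitzness of $x\mapsto x-\eta\nabla f(x)$ (already established in the proof of Proposition~\ref{prop:lsi}), and part~(a) via a one-step interpolation between $R_\eta$ and the continuous semigroup $P_\eta$ combined with stationarity of $\pi_\eta$ under $R_\eta$, the $e^{-\alpha t}$ contraction of $P_t$, and the geometric-series/fixed-point step. Two small cautions are worth flagging. First, in part~(b) your pathwise coupling gives $W_2^2(\delta_x R_\eta^i,\pi_\eta)\le(1-\alpha\eta)^{2i}\int|x-y|^2\pi_\eta(\diff y)$, and after the $2a^2+2b^2$ step the resulting bound $(1-\alpha\eta)^{2i}\cdot 2(|x-x^\ast|^2+2d/\alpha)$ is \emph{not} uniformly dominated by the stated $(1-\alpha\eta)^i(|x-x^\ast|^2+2d/\alpha)$ (it is larger for small $i$, smaller for large $i$); the exact Durmus--Moulines constant comes from a slightly different bookkeeping (e.g.\ a one-step $(1-\alpha\eta)$ contraction of $W_2^2$ via cocoercivity rather than $(1-\alpha\eta)^2$ via Lipschitzness, together with a tighter handle on $W_2^2(\delta_x,\pi_\eta)$), and cannot be read off the present sketch. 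Second, in part~(a) your estimate as written scales like $d$ rather than the $d^2$ appearing in the cited bound, so matching the displayed coefficients $2d$, $d^2\beta^2\eta/\alpha$, $d^2\eta^2/6$ would require following the bookkeeping of \citet{durmus2019high} essentially verbatim; your sketch establishes the right order in $\eta$ but not these literal constants. Both gaps are honest and you acknowledge them; since the paper only needs the result as a citation, no further work is required.
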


\begin{lemma}[Bounds on the difference of matrices]\label{lem:cov}
    For any probability distributions $\mu$ and $\nu$ on $\R^{d}$ and any vector $z\in\R^{d}$,
    \begin{align*}
        \textnormal{(a) }&\|\E_{X\sim\mu}[(X-z)(X-z)^{\top}]-\E_{Y\sim\nu}[(Y-z)(Y-z)^{\top}]\|\\
        &\quad\le \left(\sqrt{\int|x-z|^{2}\mu(\diff x)}+\sqrt{\int|y-z|^{2}\nu(\diff y)}\right)W_{2}(\mu,\nu),\\
        \textnormal{(b) }&\|\Cov(\mu)-\Cov(\nu)\|\le 2\left(\sqrt{\int|x-z|^{2}\mu(\diff x)}+\sqrt{\int|y-z|^{2}\nu(\diff y)}\right)W_{2}(\mu,\nu).
    \end{align*}
\end{lemma}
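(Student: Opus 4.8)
The plan is to prove both bounds at once by fixing an optimal coupling $(X,Y)$ of $\mu$ and $\nu$, that is, a pair of random vectors with marginals $\mu$ and $\nu$ such that $\E[|X-Y|^{2}]=W_{2}^{2}(\mu,\nu)$ (such a coupling exists whenever $\mu,\nu$ have finite second moments; if they do not, the right-hand sides are infinite and there is nothing to prove). The single algebraic fact behind everything is the identity, valid for all $a,b\in\R^{d}$,
\begin{equation*}
    aa^{\top}-bb^{\top}=a(a-b)^{\top}+(a-b)b^{\top}.
\end{equation*}
Applying it with $a=X-z$ and $b=Y-z$, so that $a-b=X-Y$, and taking expectations gives
\begin{equation*}
    \E[(X-z)(X-z)^{\top}]-\E[(Y-z)(Y-z)^{\top}]=\E[(X-z)(X-Y)^{\top}]+\E[(X-Y)(Y-z)^{\top}].
\end{equation*}

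For part (a) it then suffices to bound the operator norm of each expectation on the right. Using $\|M\|=\sup_{u,v\in\bbS^{d-1}}u^{\top}Mv$, for unit vectors $u,v$ I would write
\begin{equation*}
    u^{\top}\E[(X-z)(X-Y)^{\top}]v=\E\big[(u^{\top}(X-z))(v^{\top}(X-Y))\big]\le\sqrt{\E[|X-z|^{2}]}\,\sqrt{\E[|X-Y|^{2}]},
\end{equation*}
by Cauchy--Schwarz for the expectation followed by $(u^{\top}(X-z))^{2}\le|X-z|^{2}$ and $(v^{\top}(X-Y))^{2}\le|X-Y|^{2}$. Since $\E[|X-z|^{2}]=\int|x-z|^{2}\mu(\diff x)$ and $\E[|X-Y|^{2}]=W_{2}^{2}(\mu,\nu)$, taking the supremum over $u,v$ yields $\|\E[(X-z)(X-Y)^{\top}]\|\le\sqrt{\int|x-z|^{2}\mu(\diff x)}\,W_{2}(\mu,\nu)$; the second term is handled identically and contributes $\sqrt{\int|y-z|^{2}\nu(\diff y)}\,W_{2}(\mu,\nu)$. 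The triangle inequality for $\|\cdot\|$ then gives (a).

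For part (b) I would first record the translation invariance of the covariance, namely $\Cov(\mu)=\E_{X\sim\mu}[(X-z)(X-z)^{\top}]-m_{\mu}m_{\mu}^{\top}$ with $m_{\mu}:=\E_{X\sim\mu}[X-z]$, and likewise for $\nu$ with $m_{\nu}$. Hence
\begin{equation*}
    \Cov(\mu)-\Cov(\nu)=\big(\E[(X-z)(X-z)^{\top}]-\E[(Y-z)(Y-z)^{\top}]\big)-\big(m_{\mu}m_{\mu}^{\top}-m_{\nu}m_{\nu}^{\top}\big),
\end{equation*}
where the first bracket is controlled by part (a). Applying the same algebraic identity to $m_{\mu}m_{\mu}^{\top}-m_{\nu}m_{\nu}^{\top}$ gives $\|m_{\mu}m_{\mu}^{\top}-m_{\nu}m_{\nu}^{\top}\|\le(|m_{\mu}|+|m_{\nu}|)\,|m_{\mu}-m_{\nu}|$, and then Jensen's inequality $|m_{\mu}|\le\sqrt{\int|x-z|^{2}\mu(\diff x)}$ (similarly for $m_{\nu}$) together with $|m_{\mu}-m_{\nu}|=|\E[X-Y]|\le\sqrt{\E[|X-Y|^{2}]}=W_{2}(\mu,\nu)$ turns the second bracket into the same factor as in (a). Adding the two contributions produces the factor $2$ asserted in (b).

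I do not expect a genuine obstacle: the argument is elementary once the optimal coupling is fixed. The only points needing a little care are the reduction from operator norm to the bilinear form $\sup_{u,v}u^{\top}(\cdot)v$ with the correct nesting of the two Cauchy--Schwarz steps, and the observation that \emph{the same} coupling is reused for the mean term in (b), so that $W_{2}(\mu,\nu)$ — rather than any larger quantity — appears uniformly throughout.
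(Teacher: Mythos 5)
Your proof is correct and follows essentially the same route as the paper: fix an optimal $W_2$-coupling, split $\Cov(\mu)-\Cov(\nu)$ via translation invariance into the second-moment difference and the mean-outer-product difference, and bound each by Cauchy--Schwarz. The only cosmetic difference is that you use the matrix identity $aa^{\top}-bb^{\top}=a(a-b)^{\top}+(a-b)b^{\top}$ and the bilinear form $\sup_{u,v}u^{\top}Mv$, whereas the paper projects onto a single direction $u$ and factors the scalar difference of squares as $(u^{\top}((X-z)+(Y-z)))(u^{\top}(X-Y))$ before applying Cauchy--Schwarz; both yield the same constant.
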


\begin{proof}
    We only show (b) since the proof of (b) contains that of (a).
    Let $X\sim\mu$ and $Y\sim\nu$ be random variables defined on a common probability space such that $W_{2}(\mu,\nu)=\E[|X-Y|^{2}]^{1/2}$.
    The Cauchy--Schwarz inequality and triangular inequality yield
    \begin{align*}
        \|\Cov(\mu)-\Cov(\nu)\|&=\left\|\E[XX^{\top}]-\E[YY^{\top}]-\E[X]\E[X]^{\top}+\E[Y]\E[Y]^{\top}\right\|\\
        &\le \left\|\E[(X-z)(X-z)^{\top}]-\E[(Y-z)(Y-z)^{\top}]\right\|\\
        &\quad+\left\|\E[X-z]\E[X-z]^{\top}-\E[Y-z]\E[Y-z]^{\top}\right\|\\
        &\le \sup_{u\in\bbS^{d-1}}\left|\E[(u^{\top}(X-z))^{2}]-\E[(u^{\top}(Y-z))^{2}]\right|\\
        &\quad+\left|\E[X-z]+\E[Y-z]\right|\left|\E[X-z]-\E[Y-z]\right|\\
        &\le \sup_{u\in\bbS^{d-1}}\left|\E[(u^{\top}((X-z)+(Y-z)))(u^{\top}(X-Y))]\right|\\
        &\quad+\left(\E[|X-z|]+\E[|Y-z|]\right)\E[\left|X-Y\right|]\\
        &\le \E[|(X-z)+(Y-z)|^{2}]^{1/2}\E[|X-Y|^{2}]^{1/2}\\
        &\quad+\left(\E[|X-z|^{2}]^{1/2}+\E[|Y-z|^{2}]^{1/2}\right)\E[\left|X-Y\right|^{2}]^{1/2}\\
        &\le 2\left(\E[|X-z|^{2}]^{1/2}+\E[|Y-z|^{2}]^{1/2}\right)W_{2}(\mu,\nu).
    \end{align*}
    This gives the desired result.
\end{proof}

As we frequently use the following elementary facts, we state them as lemmas.
\begin{lemma}\label{lem:elem1}
    For any $a,b,c>0$, it holds that
    \begin{equation*}
        \min_{\lambda\in[0,c]}\left(a\lambda^{2}-b\lambda\right)\le -\left(\frac{b^{2}}{4a}\wedge \frac{bc}{2}\right).
    \end{equation*}
\end{lemma}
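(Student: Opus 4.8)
The plan is to use the fact that $\lambda\mapsto a\lambda^{2}-b\lambda$ is a convex parabola whose unconstrained minimizer is $\lambda^{\ast}=b/(2a)$, with minimum value $-b^{2}/(4a)$. Since we only need an \emph{upper} bound on $\min_{\lambda\in[0,c]}(a\lambda^{2}-b\lambda)$, it suffices to exhibit one feasible point achieving a small enough value; the natural choice is the clamped minimizer $\lambda_{0}:=\min\{b/(2a),c\}\in[0,c]$, and then to split into the two cases according to which term realizes this minimum.

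The key steps, in order: (i) If $b/(2a)\le c$, then $\lambda_{0}=b/(2a)$ lies in $[0,c]$, so $\min_{\lambda\in[0,c]}(a\lambda^{2}-b\lambda)\le a\lambda_{0}^{2}-b\lambda_{0}=-b^{2}/(4a)$, and since $b^{2}/(4a)\wedge bc/2\le b^{2}/(4a)$ this already gives the claim. (ii) If $b/(2a)>c$, equivalently $ac<b/2$, then the vertex lies to the right of $c$, so on $[0,c]$ the parabola is decreasing and $\min_{\lambda\in[0,c]}(a\lambda^{2}-b\lambda)\le ac^{2}-bc=c(ac-b)<c(b/2-b)=-bc/2\le -(b^{2}/(4a)\wedge bc/2)$. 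Combining (i) and (ii) yields the stated bound in all cases.

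There is essentially no obstacle: this is a one-variable optimization of a quadratic over an interval. The only point requiring a little care is the meaning of the ``$\wedge$'' on the right-hand side — in each of the two cases one of the candidate values ($b^{2}/(4a)$ in case (i), $bc/2$ in case (ii)) dominates, and since the attained minimum is bounded above by $-$(that dominating value), it is a fortiori bounded above by $-$(their minimum). One could also phrase the whole argument without an explicit case split by directly evaluating at $\lambda_{0}=\min\{b/(2a),c\}$ and observing $a\lambda_{0}^{2}-b\lambda_{0}\le -\tfrac12 b\lambda_{0}=-\tfrac12 b\min\{b/(2a),c\}=-(b^{2}/(4a)\wedge bc/2)$, using $a\lambda_{0}^{2}\le \tfrac12 b\lambda_{0}$ which holds precisely because $\lambda_{0}\le b/(2a)$.
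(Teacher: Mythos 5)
Your proof is correct and follows essentially the same case split as the paper's (compare against the unconstrained minimizer $b/(2a)$, evaluate at the clamped point). The case-free variant you sketch at the end, evaluating at $\lambda_{0}=\min\{b/(2a),c\}$ and using $a\lambda_{0}^{2}\le\tfrac12 b\lambda_{0}$, is a slightly tighter presentation of the same idea.
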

\begin{proof}
    Let $f(\lambda)=a\lambda^{2}-b\lambda$.
    If $b/(2a)\le c$, then the minimum is attained at $\lambda=b/(2a)$, and thus the minimum is $-b^{2}/(4a)$, and it clearly holds $b^{2}/(4a)\le bc/2$.
    If $b/(2a)>c$, then the minimum is attained at $\lambda=c$, and thus the minimum is $ac^{2}-bc\le -(bc/2)$, and it also holds $b^{2}/(4a)\ge bc/2$.
    This complete the proof.
\end{proof}

\begin{lemma}\label{lem:elem2}
    For any $a,b,c>0$ and $\delta\in(0,1],t\ge 0$, it holds that
    \begin{equation*}
        \exp\left(c-(at^{2})\wedge (bt)\right)\le \delta \iff t\ge \frac{c+\log\left(1/\delta\right)}{b}\vee \sqrt{\frac{c+\log\left(1/\delta\right)}{a}}.
    \end{equation*}
\end{lemma}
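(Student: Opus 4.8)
The plan is to take logarithms on both sides and reduce the claim to an elementary statement about when the minimum of two monotone functions of $t$ exceeds a threshold. First I would note that, since $\exp$ and $\log$ are strictly increasing, $\exp(c-(at^{2})\wedge(bt))\le\delta$ holds if and only if $c-(at^{2})\wedge(bt)\le\log\delta=-\log(1/\delta)$, i.e.\ if and only if $(at^{2})\wedge(bt)\ge c+\log(1/\delta)$. Abbreviating $s:=c+\log(1/\delta)$, I would observe that $s>0$: indeed $c>0$, and $\delta\in(0,1]$ forces $\log(1/\delta)\ge0$.

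Next I would use the fact that, for real $u,v$ and $s>0$, one has $u\wedge v\ge s$ if and only if both $u\ge s$ and $v\ge s$; applied with $u=at^{2}$ and $v=bt$, this shows that $(at^{2})\wedge(bt)\ge s$ is equivalent to the conjunction of $at^{2}\ge s$ and $bt\ge s$. For $t\ge 0$ and $a,b>0$, these two inequalities are in turn equivalent to $t\ge\sqrt{s/a}$ (taking the nonnegative square root) and $t\ge s/b$, respectively, so their conjunction is $t\ge\sqrt{s/a}\vee(s/b)$. Substituting $s=c+\log(1/\delta)$ back recovers exactly the stated right-hand side $t\ge\frac{c+\log(1/\delta)}{b}\vee\sqrt{\frac{c+\log(1/\delta)}{a}}$; and since every step above is an equivalence, the lemma follows in both directions simultaneously, with no need to argue the two implications separately.

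The only point that needs a word of care is the positivity of $s$, which is precisely where the hypothesis $\delta\le 1$ is used: without $\log(1/\delta)\ge0$ the quantity $s$ could be nonpositive, and then neither the reduction of $u\wedge v\ge s$ to the pair of inequalities nor the passage $at^{2}\ge s\iff t\ge\sqrt{s/a}$ would hold as stated. Beyond this there is no real obstacle — the lemma is a bookkeeping device packaging the manipulation $\exp(c-x)\le\delta\iff x\ge c+\log(1/\delta)$ together with the explicit solution of $\min\{at^{2},bt\}\ge s$, to be invoked when converting the exponential tail bounds appearing in the proofs of Theorem~\ref{thm:concentration} and Propositions~\ref{prop:comp:ula} and~\ref{prop:comp:ep} into the corresponding sample-size thresholds.
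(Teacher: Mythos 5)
Your proof is correct and follows essentially the same route as the paper's: take logarithms to convert the exponential inequality into $(at^{2})\wedge(bt)\ge c+\log(1/\delta)$, split the minimum into the conjunction of two inequalities, and solve each for $t$. The paper states these equivalences without comment, whereas you helpfully note where positivity of $c+\log(1/\delta)$ (hence $\delta\le 1$) is actually used, but the underlying argument is the same.
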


\begin{proof}
    We have
    \begin{align*}
        &\exp\left(c-(at^{2})\wedge (bt)\right)\le \delta 
        \iff (at^{2})\wedge (bt)\ge c+\log\left(1/\delta\right)\\
        &\iff at^{2}\ge c+\log\left(1/\delta\right)\text{ and }bt\ge c+\log\left(1/\delta\right)
        \iff t\ge \frac{c+\log\left(1/\delta\right)}{b}\vee \sqrt{\frac{c+\log\left(1/\delta\right)}{a}}.
    \end{align*}
    This complete the proof.
\end{proof}

\subsection{Proof of Theorem \ref{thm:concentration}}
Before the proof of Theorem \ref{thm:concentration}, we define translated versions of $X_{i}$ as $X_{i}^{\ctr}=X_{i}-\frac{1}{n}\sum_{i=m+1}^{m+n}\E[X_{i}]$ for all $i=m+1,\ldots,m+n$ and the corresponding sample mean as:
\begin{equation*}
    \bar{X}_{n}^{\ctr}:=\frac{1}{n}\sum_{i=m+1}^{m+n}X_{i}^{\ctr}=\frac{1}{n}\sum_{i=m+1}^{m+n}\left(X_{i}-\E\left[\frac{1}{n}\sum_{i=m+1}^{m+n}X_{i}\right]\right).
\end{equation*}
Note that the sample covariance matrix $\hat{\Sigma}_{n}$ can be rewritten as
\begin{equation*}
    \hat{\Sigma}_{n}=\frac{1}{n}\sum_{i=m+1}^{m+n}X_{i}^{\ctr}(X_{i}^{\ctr})^{\top}-\bar{X}_{n}^{\ctr}(\bar{X}_{n}^{\ctr})^{\top}.
\end{equation*}

Theorem \ref{thm:concentration} is a direct consequence of the following concentration inequalities for the squared sample means and sample second moment matrices of ULA.

We first present a concentration bound for squared sample means.
\begin{proposition}\label{prop:first}
    Suppose that Assumption~\ref{asmp:potential} holds, $\eta\in(0,1/\beta)$, and the distribution of $X_{0}$ satisfies a log-Sobolev inequality with constant $\kappa_{0}$.
    If $m\ge 0\vee(\frac{\log(\kappa_{0}/(2\eta))}{\log(1/(1-\alpha\eta))}-1)$ (with convention $\log0=-\infty$), then for any $\delta>0$, with probability at least $1-2\delta$,
    \begin{equation*}
        \left\|\bar{X}_{n}^{\ctr}(\bar{X}_{n}^{\ctr})^{\top}-\E\left[\bar{X}_{n}^{\ctr}(\bar{X}_{n}^{\ctr})^{\top}\right]\right\|\le \frac{8 (9 d+4\log\delta^{-1})}{\alpha^{2}\eta n}.
    \end{equation*}
\end{proposition}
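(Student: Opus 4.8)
The plan is to exploit that $\bar{X}_{n}^{\ctr}(\bar{X}_{n}^{\ctr})^{\top}$ is a rank-one matrix, so its operator norm is simply $|\bar{X}_{n}^{\ctr}|^{2}$. Since $\E[\bar{X}_{n}^{\ctr}]=\zero$ by the construction of the centering, we have $\E[\bar{X}_{n}^{\ctr}(\bar{X}_{n}^{\ctr})^{\top}]=\Cov(\bar{X}_{n}^{\ctr})\succeq\zero$, and bracketing $\bar{X}_{n}^{\ctr}(\bar{X}_{n}^{\ctr})^{\top}-\E[\bar{X}_{n}^{\ctr}(\bar{X}_{n}^{\ctr})^{\top}]$ between $-\|\E[\bar{X}_{n}^{\ctr}(\bar{X}_{n}^{\ctr})^{\top}]\|I_{d}$ and $|\bar{X}_{n}^{\ctr}|^{2}I_{d}$ yields
\[
    \left\|\bar{X}_{n}^{\ctr}(\bar{X}_{n}^{\ctr})^{\top}-\E[\bar{X}_{n}^{\ctr}(\bar{X}_{n}^{\ctr})^{\top}]\right\|\le \max\left\{|\bar{X}_{n}^{\ctr}|^{2},\ \left\|\E[\bar{X}_{n}^{\ctr}(\bar{X}_{n}^{\ctr})^{\top}]\right\|\right\}.
\]
Thus it suffices to control $|\bar{X}_{n}^{\ctr}|^{2}$ with high probability and $\|\E[\bar{X}_{n}^{\ctr}(\bar{X}_{n}^{\ctr})^{\top}]\|$ deterministically. (One can equally run everything through $\|M\|=\sup_{u\in\bbS^{d-1}}|u^{\top}Mu|$ with a $1/4$-net of the sphere, which is the route forced upon us for the second-moment matrix in Proposition~\ref{prop:second}; that route produces the $9d$ inside the stated bound from the log-cardinality of the net and the two-sided factor $2\delta$ from the two tails of each $u^{\top}(M-\E M)u$.)

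For the two non-random quantities: Proposition~\ref{prop:lsi}, whose hypothesis (c) coincides with the burn-in condition imposed here, shows that the joint law of $(X_{m+1},\dots,X_{m+n})$, hence that of its translate $(X_{m+1}^{\ctr},\dots,X_{m+n}^{\ctr})$, satisfies $\mathrm{LSI}(2/(\alpha^{2}\eta))$ and therefore $\mathrm{PI}(2/(\alpha^{2}\eta))$. Lemma~\ref{lem:pi} then bounds $\|\E[\bar{X}_{n}^{\ctr}(\bar{X}_{n}^{\ctr})^{\top}]\|=\|\Cov(\bar{X}_{n}^{\ctr})\|$ by $2/(\alpha^{2}\eta n)$; applying the Poincar\'{e} inequality to each coordinate of the sample mean (exactly as in the proof of that lemma) and summing over the $d$ coordinates gives $\mu:=\E[|\bar{X}_{n}^{\ctr}|^{2}]=\tr\Cov(\bar{X}_{n}^{\ctr})\le 2d/(\alpha^{2}\eta n)$.

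For the random term, write $|\bar{X}_{n}^{\ctr}|^{2}=Y^{\top}AY$ with $Y=(X_{m+1}^{\ctr},\dots,X_{m+n}^{\ctr})\in\R^{dn}$ and $A=n^{-2}J^{\top}J$, $J=[I_{d}\ \cdots\ I_{d}]$, so that $A\succeq\zero$ and $\|A\|=n^{-1}$ because $JJ^{\top}=nI_{d}$. Feeding $\kappa=2/(\alpha^{2}\eta)$ and $\|A\|=n^{-1}$ into Lemma~\ref{lem:nai25} and writing $b:=2\|A\|\kappa=4/(\alpha^{2}\eta n)$, one obtains $\E[\exp(t(Y^{\top}AY-\mu))]\le\exp(bt^{2}\mu/(1-bt))$ for $t\in(0,1/b)$; restricting to $t\le 1/(2b)$ so that $1/(1-bt)\le 2$, Markov's inequality together with Lemma~\ref{lem:elem1} yields $\Pr(Y^{\top}AY-\mu>s)\le\exp(-(s^{2}/(8b\mu))\wedge(s/(4b)))$, and Lemma~\ref{lem:elem2} inverts this to: with probability at least $1-\delta$, $|\bar{X}_{n}^{\ctr}|^{2}\le\mu+(4b\log\delta^{-1}\vee\sqrt{8b\mu\log\delta^{-1}})$. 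Substituting $\mu\le 2d/(\alpha^{2}\eta n)$ and $b=4/(\alpha^{2}\eta n)$ and splitting the cross term $\sqrt{8b\mu\log\delta^{-1}}$ by AM--GM bounds $|\bar{X}_{n}^{\ctr}|^{2}$ by a constant multiple of $(d+\log\delta^{-1})/(\alpha^{2}\eta n)$; combining with the deterministic bound on $\|\E[\bar{X}_{n}^{\ctr}(\bar{X}_{n}^{\ctr})^{\top}]\|$ from the previous step and tracking the constants gives the asserted $8(9d+4\log\delta^{-1})/(\alpha^{2}\eta n)$, comfortably implying the stated $1-2\delta$ guarantee since this scalar route spends only a single tail bound.

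The substantive point — that the dependence among the ULA iterates enters only through the dimension-free log-Sobolev constant $2/(\alpha^{2}\eta)$ — is already isolated in Proposition~\ref{prop:lsi} and converted into an exponential moment bound by Lemma~\ref{lem:nai25}, so the present argument is essentially assembly. The only care required is the accounting of the factor of $d$ (a single power, coming from the trace bound on $\mu$, equivalently from the $9^{d}$ net cardinality in the variational route) and the AM--GM split of $\sqrt{8b\mu\log\delta^{-1}}$, which must be arranged so that this cross term does not inflate to order $d^{3/2}$ or to $d\log\delta^{-1}$; no new idea is needed beyond that.
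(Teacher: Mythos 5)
Your argument is correct and takes a genuinely different route from the paper's. The paper controls the operator norm via a $1/4$-net of $\bbS^{d-1}$: for each $u$ in the net it applies Lemma~\ref{lem:nai25} to $(u^{\top}\bar{X}_{n}^{\ctr})^{2}=Y^{\top}A_{u}Y$ (with $A_u$ having $uu^{\top}/n^{2}$ in every block), then pays for the supremum through the $9^{d}$ union bound, which produces the $d\log 9$ in the exponent and a two-sided tail giving $1-2\delta$. You instead exploit the PSD--rank-one structure: since both $\bar{X}_{n}^{\ctr}(\bar{X}_{n}^{\ctr})^{\top}$ and its expectation are PSD, the sandwich $-\|\E[\cdot]\|I_{d}\preceq \bar{X}_{n}^{\ctr}(\bar{X}_{n}^{\ctr})^{\top}-\E[\cdot]\preceq |\bar{X}_{n}^{\ctr}|^{2}I_{d}$ reduces the operator-norm bound to a single one-sided scalar tail on $|\bar{X}_{n}^{\ctr}|^{2}=Y^{\top}AY$ with $A=n^{-2}J^{\top}J$ (same $\|A\|=1/n$, but $d$ times the trace). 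The dimension then enters through $\mu=\tr\Cov(\bar{X}_{n}^{\ctr})\le 2d/(\alpha^{2}\eta n)$ rather than through net cardinality, and carrying out your algebra gives the sharper bound $(6d+16\log\delta^{-1})/(\alpha^{2}\eta n)$ with probability $1-\delta$, comfortably below the stated $8(9d+4\log\delta^{-1})/(\alpha^{2}\eta n)$ and with a better failure probability than the asserted $1-2\delta$. The trade-off is that this shortcut relies entirely on the rank-one structure and does not transfer to the sample second-moment matrix in Proposition~\ref{prop:second}, where the net argument is unavoidable, which is presumably why the paper uses the same machinery in both places.
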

\begin{proof}
    For any $u\in\bbS^{d-1}$ and $\lambda\le \alpha^{2}\eta n/8$, using Proposition \ref{prop:lsi} and Lemma \ref{lem:nai25} with $A$ such that 
    \begin{equation*}
        A = \frac{1}{n^{2}}\left[\begin{matrix}
            uu^{\top} & uu^{\top} & \cdots & uu^{\top}\\
            uu^{\top} & uu^{\top} & \cdots & uu^{\top}\\
            \vdots & \vdots & \ddots & \vdots\\
            uu^{\top} & uu^{\top} & \cdots & uu^{\top}
        \end{matrix}\right]
    \end{equation*}
    (note that the translation by $(1/n)\sum_{i=m+1}^{m+n}\E[X_{i}]$ does not affect the estimate for the log-Soblev constant and $\|A\|=1/n$), we have
    \begin{equation*}
        \E\left[\exp\left(\lambda\left(\frac{1}{n}\sum_{i=m+1}^{m+n}u^{\top}X_{i}^{\ctr}\right)^{2}-\E\left[\left(\frac{1}{n}\sum_{i=m+1}^{m+n}u^{\top}X_{i}^{\ctr}\right)^{2}\right]\right)\right]\le \exp\left(\frac{8\lambda^{2}}{\alpha^{2}\eta n}\cdot\frac{2}{\alpha^{2}\eta n}\right),
    \end{equation*}
    where it follows from Proposition \ref{prop:lsi}, Lemma \ref{lem:pi}, and the resulting bound such that
    \begin{equation*}
        \E\left[\left(\frac{1}{n}\sum_{i=m+1}^{m+n}u^{\top}X_{i}^{\ctr}\right)^{2}\right]\le \frac{2}{\alpha^{2}\eta n}.
    \end{equation*}
    Then, using the net argument \citep{vershynin2018high}, for some $1/4$-net $\calN\subset\bbS^{d-1}$ with $|\calN|\le 9^{d}$, for any $\lambda\in[0,\alpha^{2}\eta n/8]$ and $t\ge0$,
    \begin{align*}
        &\Pr\left(\left\|\bar{X}_{n}^{\ctr}(\bar{X}_{n}^{\ctr})^{\top}-\E\left[\bar{X}_{n}^{\ctr}(\bar{X}_{n}^{\ctr})^{\top}\right]\right\|\ge t\right)\\
        &\le \sum_{u\in\calN}\Pr\left(\left|\frac{1}{n}\sum_{i=m+1}^{m+n}(u^{\top}X_{i}^{\ctr})^{2}-\E[(u^{\top}X_{i}^{\ctr})^{2}]\right|\ge t/2\right)\\
        &\le 2\cdot 9^{d}\exp\left(\frac{16\lambda^{2}}{\alpha^{4}\eta^{2}n^{2}}-\frac{\lambda t}{2}\right).
    \end{align*}
    Lemma \ref{lem:elem1} (with $a=16/(\alpha^{4}\eta^{2}n^{2})$, $b=t/2$, $c=\alpha^{2}\eta n/8$) yields that
    \begin{equation*}
        \Pr\left(\left\|\bar{X}_{n}^{\ctr}(\bar{X}_{n}^{\ctr})^{\top}-\E\left[\bar{X}_{n}^{\ctr}(\bar{X}_{n}^{\ctr})^{\top}\right]\right\|\ge t\right)\le 2\exp\left(d\log 9 -  \left(\frac{\alpha^{4}\eta^{2} n^{2}}{256} t^{2}\right)\wedge\left(\frac{\alpha^{2}\eta n}{32}t\right)\right).
    \end{equation*}
    Hence, using Lemma \ref{lem:elem2} and the fact that and $\sqrt{256(d\log 9+\log\delta^{-1})}\le 32(d\log 9+\log\delta^{-1})$ with $\delta\in(0,1/2]$, we derive that for any $\delta>0$,
    \begin{align*}
        \Pr\left(\left\|\bar{X}_{n}^{\ctr}(\bar{X}_{n}^{\ctr})^{\top}-\E\left[\bar{X}_{n}^{\ctr}(\bar{X}_{n}^{\ctr})^{\top}\right]\right\|\ge \frac{32 (d\log9+\log\delta^{-1})}{\alpha^{2}\eta n}\right)
        \le 2\delta.
    \end{align*}
    This complete the proof with $4\log9\le 9$.
\end{proof}

We also give a concentration bound for the sample second moment matrix around its expectation.
\begin{proposition}\label{prop:second}
    Suppose that Assumption~\ref{asmp:potential} holds, $\eta\in(0,1/\beta)$, and the distribution of $X_{0}$ satisfies a log-Sobolev inequality with constant $\kappa_{0}$.
    If $m$ satisfies
    \begin{equation*}
        m\ge 0\vee\left(\frac{\log(\kappa_{0}/\eta)}{\log(1/(1-\alpha\eta))}-1\right)\vee \left(\frac{\log(4(\E[|X_{0}-x^{\ast}|^{2}]+d/\alpha)/\eta n)}{\log(1/(1-\alpha\eta))}-1\right)
    \end{equation*}
    (with convention $\log0=-\infty$), then for any $\delta>0$, with probability at least $1-2\delta$,
    \begin{align*}
        &\left\|\frac{1}{n}\sum_{i=m+1}^{m+n}X_{i}^{\ctr}(X_{i}^{\ctr})^{\top}-\E\left[\frac{1}{n}\sum_{i=m+1}^{m+n}X_{i}^{\ctr}(X_{i}^{\ctr})^{\top}\right]\right\|\\
        &\le \frac{1}{\alpha}\left(\sqrt{128\cdot \frac{9d+4\log\delta^{-1}}{\alpha\eta n}}\vee 8\left(\frac{9 d+4\log\delta^{-1}}{\alpha\eta n}\right)\right).
    \end{align*}
\end{proposition}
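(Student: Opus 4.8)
The plan is to mirror the proof of Proposition~\ref{prop:first}, replacing the all‑ones block quadratic form by a block‑diagonal one and supplying a \emph{dimension‑free} bound on its mean. For $u\in\bbS^{d-1}$ let $A_u\in\R^{dn\times dn}$ be block‑diagonal with all $n$ diagonal blocks equal to $(1/n)uu^{\top}$, and set $Y=(X_{m+1}^{\ctr},\ldots,X_{m+n}^{\ctr})$, so that $Y^{\top}A_uY=\tfrac1n\sum_{i=m+1}^{m+n}(u^{\top}X_i^{\ctr})^{2}$ and $\|A_u\|=1/n$. Since $Y$ is a deterministic translate of $(X_{m+1},\ldots,X_{m+n})$ and the burn‑in hypothesis here is stronger than condition~(c) of Proposition~\ref{prop:lsi} (because $\kappa_0/\eta\ge\kappa_0/(2\eta)$), the law of $Y$ satisfies LSI($2/(\alpha^{2}\eta)$). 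Applying Lemma~\ref{lem:nai25} with $\kappa=2/(\alpha^{2}\eta)$, $\|A_u\|=1/n$, and any uniform upper bound $M\ge\E[Y^{\top}A_uY]$ (valid to substitute since, after centering, the surviving coefficient of $\E[Y^\top A_uY]$ is nonnegative), one gets for $\lambda\in[0,\alpha^{2}\eta n/8]$, where $1-2\|A_u\|\kappa\lambda\ge1/2$,
\[
\E\left[\exp\left(\lambda\big(Y^{\top}A_uY-\E[Y^{\top}A_uY]\big)\right)\right]\le\exp\left(\frac{8M\lambda^{2}}{\alpha^{2}\eta n}\right),
\]
and the lower tail $t=-\lambda$ obeys the same bound since there $1-2\|A_u\|\kappa t>1$.

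The substantive step is the uniform estimate $\E[Y^{\top}A_uY]\le 4/\alpha$. The key observation is that $\mu_n:=\tfrac1n\sum_{i=m+1}^{m+n}\E[X_i]$ is exactly the minimiser of $c\mapsto\tfrac1n\sum_{i=m+1}^{m+n}\E[(u^{\top}X_i-c)^{2}]$ over $c\in\R$, so replacing $u^\top\mu_n$ by $u^{\top}m_\eta$, where $m_\eta:=\int x\,\pi_\eta(\diff x)$ is the mean of $\pi_\eta$ (well defined by Proposition~\ref{prop:moment}(b)), only enlarges the average:
\[
\E[Y^{\top}A_uY]=\frac1n\sum_{i=m+1}^{m+n}\E[(u^{\top}(X_i-\mu_n))^{2}]\le\frac1n\sum_{i=m+1}^{m+n}\Big(\Var(u^{\top}X_i)+|\E[X_i]-m_\eta|^{2}\Big).
\]
For the variance term, note $\eta\le1/\beta\le1/\alpha$ gives $\kappa_0/\eta\ge\alpha\kappa_0$, so the first burn‑in condition forces $m+1\ge\log(\alpha\kappa_0)/(2\log(1/(1-\alpha\eta)))$; by Lemma~\ref{lem:vem19} each $\mathrm{law}(X_i)$ with $i\ge m+1$ then satisfies LSI($3/\alpha$), hence a Poincaré inequality with constant $3/\alpha$, so $\Var(u^{\top}X_i)\le3/\alpha$. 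For the bias term, conditioning on $X_0$ and using Jensen's inequality together with Proposition~\ref{prop:wasserstein}(b) gives $|\E[X_i]-m_\eta|^{2}\le(1-\alpha\eta)^{i}(\E[|X_0-x^{\ast}|^{2}]+2d/\alpha)$, whence, summing the geometric series and invoking the third burn‑in condition $(1-\alpha\eta)^{m+1}\le\eta n/(4(\E[|X_0-x^{\ast}|^{2}]+d/\alpha))$,
\[
\frac1n\sum_{i=m+1}^{m+n}|\E[X_i]-m_\eta|^{2}\le\frac{(1-\alpha\eta)^{m+1}}{\alpha\eta n}\big(\E[|X_0-x^{\ast}|^{2}]+2d/\alpha\big)\le\frac1{2\alpha}.
\]
Hence $\E[Y^{\top}A_uY]\le 3/\alpha+1/(2\alpha)\le 4/\alpha=:M$.

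With $M=4/\alpha$ the remainder is the net argument essentially verbatim from Proposition~\ref{prop:first}: a $1/4$‑net $\calN\subset\bbS^{d-1}$ with $|\calN|\le9^{d}$ reduces the operator‑norm deviation at level $s$ to the events $\{|Y^{\top}A_uY-\E[Y^{\top}A_uY]|\ge s/2\}$, $u\in\calN$; a union bound over $\calN$ and over both tails, combined with the displayed MGF bound (with $M=4/\alpha$, so the exponent coefficient is $32\lambda^{2}/(\alpha^{3}\eta n)$), yields $\Pr(\|\,\cdot\,\|\ge s)\le 2\cdot 9^{d}\exp(32\lambda^{2}/(\alpha^{3}\eta n)-\lambda s/2)$ for every $\lambda\in[0,\alpha^{2}\eta n/8]$. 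Optimising in $\lambda$ via Lemma~\ref{lem:elem1} (with $a=32/(\alpha^{3}\eta n)$, $b=s/2$, $c=\alpha^{2}\eta n/8$), then inverting via Lemma~\ref{lem:elem2} and using $4\log9\le9$, produces exactly the asserted bound, valid with probability at least $1-2\delta$. The one genuinely new ingredient relative to Proposition~\ref{prop:first} is the dimension‑free control of $\E[Y^{\top}A_uY]$, and that is where I expect the difficulty to lie: the crude estimate $\E[(u^{\top}X_i^{\ctr})^{2}]\le\E[|X_i-x^{\ast}|^{2}]$, of order $d/\alpha$, is fatally lossy, so one must cleanly separate the stationary variance (dimension‑free, via the marginal LSI of Lemma~\ref{lem:vem19}) from the transient bias (geometrically small, via Wasserstein contraction), and it is precisely the latter that necessitates the extra burn‑in length $\log\big((\E[|X_0-x^{\ast}|^{2}]+d/\alpha)/(\eta n)\big)/\log(1/(1-\alpha\eta))$ appearing in the third condition.
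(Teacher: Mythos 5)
Your proof is correct and follows essentially the same strategy as the paper's: use Proposition~\ref{prop:lsi} to get the joint LSI, establish the dimension-free bound $\E[Y^{\top}A_uY]\le 4/\alpha$ by splitting into a stationary variance piece (controlled via the marginal LSI from Lemma~\ref{lem:vem19}, hence a Poincar\'e inequality) and a transient bias piece (controlled via Proposition~\ref{prop:wasserstein}(b)), then apply Lemma~\ref{lem:nai25} and run the standard net/union-bound/Chernoff machinery through Lemmas~\ref{lem:elem1} and~\ref{lem:elem2}. The one place you depart from the paper is in handling the mean bound: the paper writes $\E[(u^{\top}(X_i-\mu_n))^2]=\Var(u^{\top}X_i)+(u^{\top}(\E[X_i]-\mu_n))^2$ exactly and then pays a factor $4$ via $(a-b)^2\le 2a^2+2b^2$ and Jensen to pass from $\mu_n$ to $\E_{\pi_\eta}[X]$, whereas you observe that $u^{\top}\mu_n$ is the exact minimizer of $c\mapsto\frac1n\sum_i\E[(u^{\top}X_i-c)^2]$, so substituting $u^{\top}m_\eta$ only increases the sum. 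Your route is cleaner and loses no constant factor in that step (giving $7/(2\alpha)$ before rounding up to $4/\alpha$), while also correctly carrying the $2d/\alpha$ term from Proposition~\ref{prop:wasserstein}(b) that the paper's display silently drops to $d/\alpha$; both lead to the same final bound because the burn-in condition is chosen with slack.
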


\begin{proof}
    Note that $m$ satisfies
    \begin{align*}
        m&\ge \left(\frac{\log(\kappa_{0}/(2\eta))}{\log(1/(1-\alpha\eta))}-1\right)\vee \left(\frac{\log(4(\E[|X_{0}-x^{\ast}|^{2}]+d/\alpha)/\eta n)}{\log(1/(1-\alpha\eta))}-1\right)\\
        &\quad\vee \left(\frac{\log(\alpha \kappa_{0})}{2\log(1/(1-\alpha\eta))}-1\right).
    \end{align*}
    Hence, Lemma \ref{lem:vem19}, Proposition \ref{prop:wasserstein}, and the assumption on $m$ yield that
    \begin{align*}
        &\E\left[\frac{1}{n}\sum_{i=m+1}^{m+n}(u^{\top}X_{i}^{\ctr})^{2}\right]\\
        &=\frac{1}{n}\sum_{i=m+1}^{m+n}\E\left[(u^{\top}(X_{i}-\E[X_{i}])^{2}\right]
        +\frac{1}{n}\sum_{i=m+1}^{m+n}\left(u^{\top}\left(\E[X_{i}]-\frac{1}{n}\sum_{i=m+1}^{m+n}\E[X_{i}]\right)\right)^{2}\\
        &\le \frac{3}{\alpha}+\frac{2}{n}\sum_{i=m+1}^{m+n}\left(u^{\top}\left(\E[X_{i}]-\E_{\pi_{\eta}}\left[X\right]\right)\right)^{2}+2\left(u^{\top}\left(\frac{1}{n}\sum_{i=m+1}^{m+n}\E[X_{i}]-\E_{\pi_{\eta}}\left[X\right]\right)\right)^{2}\\
        &\le \frac{3}{\alpha}+\frac{4}{n}\sum_{i=m+1}^{m+n}\left(u^{\top}\left(\E[X_{i}]-\E_{\pi_{\eta}}\left[X\right]\right)\right)^{2}
        \le \frac{3}{\alpha}+\frac{4}{n}\sum_{i=m+1}^{m+n}\left\|\E[X_{i}]-\E_{\pi_{\eta}}\left[X\right]\right\|^{2}\\
        &\le \frac{3}{\alpha}+\frac{4}{n}\sum_{i=m+1}^{m+n}W_{2}^{2}(X_{i},\pi_{\eta})\le \frac{3}{\alpha}+\frac{4}{n}\sum_{i=m+1}^{m+n}(1-\alpha\eta)^{i}\left(\E[|X_{0}-x^{\ast}|^{2}]+\frac{d}{\alpha}\right)\\
        &\le \frac{3}{\alpha}+\frac{4}{\alpha \eta n}(1-\alpha\eta)^{m+1}\left(\E[|X_{0}-x^{\ast}|^{2}]+\frac{d}{\alpha}\right)\le \frac{4}{\alpha}.
    \end{align*}
    Using Proposition \ref{prop:lsi} and Lemma \ref{lem:nai25}, for any $u\in\bbS^{d-1}$ and $\lambda\le \alpha^{2}\eta n/8$, we have
    \begin{equation*}
        \E\left[\exp\left(\frac{\lambda}{n}\sum_{i=m+1}^{m+n}\left((u^{\top}X_{i}^{\ctr})^{2}-\E[(u^{\top}X_{i}^{\ctr})^{2}]\right)\right)\right]\le \exp\left(\frac{8\lambda^{2}}{\alpha^{2}\eta n}\cdot\frac{4}{\alpha}\right).
    \end{equation*}
    Then, \citet{vershynin2018high} gives that for some $1/4$-net $\calN\subset\bbS^{d-1}$ with $|\calN|\le 9^{d}$, for any $\lambda\in [0,\alpha^{2}\eta n/8]$,
    \begin{align*}
        &\Pr\left(\left\|\frac{1}{n}\sum_{i=m+1}^{m+n}X_{i}^{\ctr}(X_{i}^{\ctr})^{\top}-\E\left[\frac{1}{n}\sum_{i=m+1}^{m+n}X_{i}^{\ctr}(X_{i}^{\ctr})^{\top}\right]\right\|\ge t\right)\\
        &\le \sum_{u\in\calN}\Pr\left(\left|\frac{1}{n}\sum_{i=m+1}^{m+n}(u^{\top}X_{i}^{\ctr})^{2}-\E\left[\frac{1}{n}\sum_{i=m+1}^{m+n}(u^{\top}X_{i}^{\ctr})^{2}\right]\right|\ge \frac{t}{2}\right)\\
        &\le 2\cdot 9^{d}\exp\left(\frac{32\lambda^{2}}{\alpha^{3}\eta n}-\frac{\lambda t}{2}\right).
    \end{align*}
    Lemma \ref{lem:elem1} (with $a=32/(\alpha^{3}\eta n)$, $b=t/2$, $c=\alpha^{2}\eta n/8$) yields that
    \begin{align*}
        &\Pr\left(\left\|\frac{1}{n}\sum_{i=m+1}^{m+n}X_{i}^{\ctr}(X_{i}^{\ctr})^{\top}-\E\left[\frac{1}{n}\sum_{i=m+1}^{m+n}X_{i}^{\ctr}(X_{i}^{\ctr})^{\top}\right]\right\|\ge t\right)\\
        &\le 2\exp\left(d\log 9-\left(\frac{\alpha^{3}\eta n}{512}t^{2}\right)\wedge \left(\frac{\alpha^{2}\eta n}{32}t\right)\right).
    \end{align*}
    Hence, by Lemma \ref{lem:elem2}, for any $\delta>0$, with probability at least $1-2\delta$,
    \begin{align*}
        &\left\|\frac{1}{n}\sum_{i=m+1}^{m+n}X_{i}^{\ctr}(X_{i}^{\ctr})^{\top}-\E\left[\frac{1}{n}\sum_{i=m+1}^{m+n}X_{i}^{\ctr}(X_{i}^{\ctr})^{\top}\right]\right\|&\\
        &\le \frac{1}{\alpha}\left(\sqrt{\frac{512(d\log9+\log\delta^{-1})}{\alpha\eta n}}\vee \left(\frac{32(d\log9+\log\delta^{-1})}{\alpha\eta n}\right)\right).
    \end{align*}
    We derive the proof using $4\log9\le 9$.
\end{proof}

\begin{proof}[Proof of Theorem \ref{thm:concentration}]
    Consider on the event that both the inequalities in Propositions \ref{prop:first} and \ref{prop:second} hold; this event happens with probability at least $1-4\delta$. 
    Then, we have
    \begin{align*}
        \left\|\hat{\Sigma}_{n}-\E[\hat{\Sigma}_{n}]\right\|
        &\le \left\|\frac{1}{n}\sum_{i=m+1}^{m+n}X_{i}^{\ctr}(X_{i}^{\ctr})^{\top}-\E\left[\frac{1}{n}\sum_{i=m+1}^{m+n}X_{i}^{\ctr}(X_{i}^{\ctr})^{\top}\right]\right\|\\
        &\quad+\left\|\bar{X}_{n}^{\ctr}(\bar{X}_{n}^{\ctr})^{\top}-\E\left[\bar{X}_{n}^{\ctr}(\bar{X}_{n}^{\ctr})^{\top}\right]\right\|\\
        &\le \frac{8}{\alpha}\left(\sqrt{\frac{2(9d+4\log\delta^{-1})}{\alpha\eta n}}\vee \left(\frac{9 d+4\log\delta^{-1}}{\alpha\eta n}\right)\right)+\frac{8 (9 d+4\log\delta^{-1})}{\alpha^{2}\eta n}\\
        &\le \frac{16}{\alpha}\left(\sqrt{\frac{2(9d+4\log\delta^{-1})}{\alpha\eta n}}\vee \left(\frac{9 d+4\log\delta^{-1}}{\alpha\eta n}\right)\right).
    \end{align*}
    This is the desired conclusion.
\end{proof}

\subsection{Proof of Proposition \ref{prop:comp:ula}}
We now give the proof of Proposition \ref{prop:comp:ula}.

\begin{proof}[Proof of Proposition \ref{prop:comp:ula}]
    We employ the bias--variance decomposition \eqref{eq:bvd}.

    (Step 1) Since $\epsilon\le \min\{1,1/\alpha\}$ and $\eta\le \epsilon^{2}\alpha^{3}/(2700\beta^{2}d^{2})$, by Proposition \ref{prop:moment}, Proposition \ref{prop:wasserstein}, and Lemma \ref{lem:cov}-(b), we have
    \begin{align*}
        \|\Cov(\pi_{\eta})-\Cov(\pi)\|&\le 2\left(\sqrt{\int|x-x^{\ast}|^{2}\pi_{\eta}(\diff x)}+\sqrt{\int|y-x^{\ast}|^{2}\pi(\diff y)}\right)W_{2}(\pi_{\eta},\pi)\\
        &\le 2\left(\sqrt{2d/\alpha}+\sqrt{d/\alpha}\right)\sqrt{4(\beta/\alpha)^{2}\eta(2d + d^{2}\beta^{2}\eta/\alpha + d^{2}\eta^{2}/6)}\\
        &\le \sqrt{100(d/\alpha)(\beta/\alpha)^{2}\eta(2d + d^{2}\beta^{2}\eta/\alpha + d^{2}\eta^{2}/6)}\\
        &\le \sqrt{\left(\frac{100\beta^{2}d}{\alpha^{3}}\right)\eta\left(2d +\frac{\alpha^{3}\beta^{2}\epsilon^{2}d^{2}}{2700\alpha\beta^{2}d^{2}} + \frac{\epsilon^{4}\alpha^{6}d^{2}}{(6\cdot 2700^{2})\beta^{4}d^{4}}\right)}\\
        &\le \sqrt{\left(\frac{100\beta^{2}d}{\alpha^{3}}\right)\eta\left(2d + \frac{1}{2700} + \frac{1}{6\cdot 2700^{2}}\right)}\\
        &\le \sqrt{\left(\frac{300\beta^{2}d^{2}}{\alpha^{3}}\right)\eta}\le \frac{\epsilon}{3}.
    \end{align*}

    (Step 2)
    Lemma \ref{lem:pi} (with the fact that a distribution satisfying an LSI($C$) for some $C\ge0$ also satisfies a PI($C$)), Propositions \ref{prop:moment} and \ref{prop:wasserstein}, and Lemma \ref{lem:cov} lead to
    \begin{align*}
    \|\E[\hat{\Sigma}_{n}]-\Cov(\pi_{\eta})\|
    &\le \left\|\frac{1}{n}\sum_{i=m+1}^{m+n}\E[(X_{i}-x^{\ast})(X_{i}-x^{\ast})^{\top}]-\E_{\pi_{\eta}}[(X-x^{\ast})(X-x^{\ast})^{\top}]\right\|\\
    &\quad+\left\|\E[(\bar{X}_{n}-x^{\ast})(\bar{X}_{n}-x^{\ast})^{\top}]-\E_{\pi_{\eta}}[X-x^{\ast}]\E_{\pi_{\eta}}[X-x^{\ast}]^{\top}\right\|\\ 
    &\le \frac{1}{n}\sum_{i=m+1}^{m+n}\left\|\E[(X_{i}-x^{\ast})(X_{i}-x^{\ast})^{\top}]-\E_{\pi_{\eta}}[(X-x^{\ast})(X-x^{\ast})^{\top}]\right\|\\
    &\quad+\left\|\E[(\bar{X}_{n}-x^{\ast})(\bar{X}_{n}-x^{\ast})^{\top}]-\E[\bar{X}_{n}-x^{\ast}]\E[\bar{X}_{n}-x^{\ast}]^{\top}\right\|\\
    &\quad+\left\|\E[\bar{X}_{n}-x^{\ast}]\E[\bar{X}_{n}-x^{\ast}]^{\top}-\E_{\pi_{\eta}}[X-x^{\ast}]\E_{\pi_{\eta}}[X-x^{\ast}]^{\top}\right\|\\ 
    &\le \frac{1}{n}\sum_{i=m+1}^{m+n}W_{2}(\delta_{x^{\ast}}R_{\eta}^{i},\pi_{\eta})\left(\sqrt{\E[|X_{i}-x^{\ast}|^{2}]}+\sqrt{\E_{\pi_{\eta}}[|X-x^{\ast}|^{2}]}\right)\\
    &\quad+\frac{2}{\alpha^{2}\eta n}+\left|\E[\bar{X}_{n}-x^{\ast}]-\E_{\pi_{\eta}}[X-x^{\ast}]\right|\left(\left|\E[\bar{X}_{n}-x^{\ast}]\right|+\left|\E_{\pi_{\eta}}[X-x^{\ast}]\right|\right)\\
    &\le \frac{1}{n}\sum_{i=m+1}^{m+n}\sqrt{(1-\alpha\eta)^{i}2d/\alpha}\left(2\sqrt{2d/\alpha}\right)+\frac{2}{\alpha^{2}\eta n}\\
    &\quad+\frac{1}{n}\sum_{i=m+1}^{m+n}\left|\E[X_{i}]-\E_{\pi_{\eta}}[X]\right|\left(\frac{1}{n}\sum_{i=m+1}^{m+n}\E\left[\left|X_{i}-x^{\ast}\right|\right]+\left|\E_{\pi_{\eta}}[X-x^{\ast}]\right|\right)
    \\
    &\le \frac{1}{n}\sum_{i=m+1}^{m+n}\sqrt{(1-\alpha\eta)^{i}2d/\alpha}\left(2\sqrt{2d/\alpha}\right)+\frac{2}{\alpha^{2}\eta n}\\
    &\quad+\frac{1}{n}\sum_{i=m+1}^{m+n}\sqrt{(1-\alpha\eta)^{i}(2d/\alpha)}\left(2\sqrt{2d/\alpha}\right)\\
    &\le \frac{8d}{\alpha n}\sum_{i=m+1}^{m+n}(1-\alpha\eta)^{i/2}+\frac{2}{\alpha^{2}\eta n}
    \le \frac{8d}{\alpha n}\frac{(1-\alpha\eta)^{(m+1)/2}}{1-(1-\alpha\eta)^{1/2}}+\frac{2}{\alpha^{2}\eta n}\\
    &\le \frac{16d}{\alpha^{2}\eta n}(1-\alpha\eta)^{(m+1)/2}+\frac{2}{\alpha^{2}\eta n}
    \le \frac{\epsilon}{6}+\frac{\epsilon}{6}=\frac{\epsilon}{3},
\end{align*}
where we used $\sqrt{1-x}\le 1-x/2$ for $x\in[0,1]$ on the second last inequality and the condition on $n$ along with the inequalities
\begin{align*}
    &\frac{16d}{\alpha^{2}\eta n}(1-\alpha\eta)^{(m+1)/2}\le\frac{16\alpha^{3}\epsilon^{2}d}{2^{9}\cdot 3^{2}\alpha^{2}(9d+\log(4/\delta))}\le \frac{\alpha^{2}\epsilon d}{2^{5}3^{4}d}\le \frac{\epsilon}{6},\\
    &\frac{2}{\alpha^{2}\eta n}\le \frac{\alpha^{2} \epsilon}{2^8\cdot 3^2(9d+\log(4/\delta))}\le \frac{\epsilon}{2^8\cdot 3^4}\le \frac{\epsilon}{6}
\end{align*} 
in the last line (note that $\alpha\epsilon\le 1$ and obviously $d\ge 1$).

(Step 3) In the third place, we apply Theorem \ref{thm:concentration} to bound the variance term.
Noting that (i) the log-Sobolev constant of $\delta_{x^{\ast}}$ is $0$, and (ii) $\log((4d)/(\alpha\eta n))=\log(4(\E_{X_{0}\sim\delta_{x^{\ast}}}[|X_{0}-x^{\ast}|^{2}]+d/\alpha)/(\eta n))$, we see that the condition on $m$ of Theorem \ref{thm:concentration} is satisfied.
Since $(9d+4\log(4/\delta))/(\alpha\eta n)\le 1$ by the conditions on $n$ and $\epsilon\le 1/\alpha$, with probability at least $1-\delta$,
\begin{equation*}
    \left\|\hat{\Sigma}_{n}-\E\left[\hat{\Sigma}_{n}\right]\right\|\le \frac{16}{\alpha}\sqrt{\frac{2(9d+4\log(4/\delta)^{-1})}{\alpha\eta n}}\le \frac{\epsilon}{3}
\end{equation*}
Combining the bounds, we obtain the desired result.
\end{proof}

\subsection{Proof of Proposition \ref{prop:comp:ep}}
We first prepare a concentration bound for $\tilde{\Sigma}_{N}$ around its expectation.
\begin{proposition}\label{prop:concentration:ep}
    Suppose that Assumption \ref{asmp:potential} holds.
    If $\eta\in(0,1/\beta)$, for any $\delta>0$, $m\ge0$, and $N\ge1$, with probability at least $1-4\delta$,
    \begin{equation*}
        \left\|\tilde{\Sigma}_{N}-\E\left[\tilde{\Sigma}_{N}\right]\right\|\le \frac{16}{\alpha}\left(\sqrt{\frac{9d+4\log\delta^{-1}}{N}}\vee \frac{9d+4\log\delta^{-1}}{N}\right).
    \end{equation*}
\end{proposition}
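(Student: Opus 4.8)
The plan is to follow the template of the proofs of Propositions~\ref{prop:first} and \ref{prop:second}, with the joint log-Sobolev constant $2/(\alpha^2\eta)$ of Proposition~\ref{prop:lsi} replaced by the much smaller constant $2/\alpha$ afforded by independence. The first step is to record the relevant isoperimetry: since the $N$ chains are driven by independent Gaussian increments and are initialized at the deterministic point $x^\ast$ (so the initial law has log-Sobolev constant $\kappa_0=0$), the last iterates $X_{m+1}^1,\ldots,X_{m+1}^N$ are i.i.d.\ with common law $\mu:=\delta_{x^\ast}R_\eta^{m+1}$; by Lemma~\ref{lem:vem19} this $\mu$ satisfies a log-Sobolev inequality with constant $\bigl(1-(1-\alpha\eta)^{2(m+1)}\bigr)(2/\alpha)\le 2/\alpha$ for every $m\ge 0$. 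Because log-Sobolev inequalities tensorize with the constant preserved under products \citep{bakry2014analysis}, the joint law of $(X_{m+1}^1,\ldots,X_{m+1}^N)$ on $\R^{dN}$ satisfies LSI($2/\alpha$), and translating each block by the common mean $\E[X_{m+1}^1]$ leaves this constant unchanged.

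With this in hand I would set $X_{m+1,k}^{\ctr}:=X_{m+1}^k-\E[X_{m+1}^1]$, $\bar X^{\ctr}:=\tfrac1N\sum_{k=1}^N X_{m+1,k}^{\ctr}$, write $\tilde\Sigma_N=\tfrac1N\sum_{k=1}^N X_{m+1,k}^{\ctr}(X_{m+1,k}^{\ctr})^\top-\bar X^{\ctr}(\bar X^{\ctr})^\top$, and split $\|\tilde\Sigma_N-\E[\tilde\Sigma_N]\|$ by the triangle inequality into a sample-second-moment term and a squared-sample-mean term, handled exactly as in Propositions~\ref{prop:second} and \ref{prop:first} respectively. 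For the second-moment term, for fixed $u\in\bbS^{d-1}$ the a priori bound $\E\bigl[\tfrac1N\sum_k(u^\top X_{m+1,k}^{\ctr})^2\bigr]=\mathrm{Var}(u^\top X_{m+1}^1)\le 2/\alpha$ follows from the Poincar\'{e} inequality implied by LSI($2/\alpha$) applied to the $1$-Lipschitz map $x\mapsto u^\top x$; then Lemma~\ref{lem:nai25} applied to the block-diagonal matrix $A=\tfrac1N\diag(uu^\top,\ldots,uu^\top)$ (for which $\|A\|=1/N$), a union bound over a $1/4$-net $\calN\subset\bbS^{d-1}$ with $|\calN|\le 9^d$, optimization of the free parameter $\lambda$ via Lemma~\ref{lem:elem1}, and conversion to a high-probability bound via Lemma~\ref{lem:elem2} yield the desired tail. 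For the squared-mean term, independence gives $\E\bigl[(\tfrac1N\sum_k u^\top X_{m+1,k}^{\ctr})^2\bigr]=\tfrac1N\mathrm{Var}(u^\top X_{m+1}^1)\le 2/(\alpha N)$ (equivalently, apply Lemma~\ref{lem:pi} with $C=2/\alpha$ to the joint law), and the same net/Lemma~\ref{lem:nai25}/Lemmas~\ref{lem:elem1}--\ref{lem:elem2} argument with the rank-one-block matrix $B=\tfrac1{N^2}(\mathbf 1_N\mathbf 1_N^\top)\otimes uu^\top$ (again $\|B\|=1/N$) controls it. Each event has probability at least $1-2\delta$, so intersecting them gives probability at least $1-4\delta$.

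Two features make this strictly easier than the single-chain argument: the joint log-Sobolev constant is the dimension- and step-size-free quantity $2/\alpha$, so the effective sample size is simply $N$ and no burn-in condition is needed; and independence forces the ``bias-by-non-stationarity'' contribution inside $\E[\tfrac1N\sum_k(u^\top X_{m+1,k}^{\ctr})^2]$ to vanish, since all $X_{m+1}^k$ share the same mean (this was the term that necessitated Proposition~\ref{prop:wasserstein} in the proof of Proposition~\ref{prop:second}). I therefore do not anticipate a genuine obstacle; the only care required is the bookkeeping of numerical constants through Lemmas~\ref{lem:elem1}--\ref{lem:elem2} and the correct pairing of the subgaussian ($\sqrt{\cdot}$) and subexponential ($\cdot$) regimes so that the two bounds consolidate into $\tfrac{16}{\alpha}\bigl(\sqrt{(9d+4\log\delta^{-1})/N}\vee (9d+4\log\delta^{-1})/N\bigr)$.
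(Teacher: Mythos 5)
Your proposal matches the paper's own proof: both establish that each $X_{m+1}^{k}$ satisfies an LSI($2/\alpha$) via Lemma~\ref{lem:vem19} starting from the point mass $\delta_{x^\ast}$, tensorize to get LSI($2/\alpha$) for the joint law on $\R^{dN}$, decompose $\tilde\Sigma_N-\E[\tilde\Sigma_N]$ into a second-moment term and a squared-mean term, and run the net argument with Lemma~\ref{lem:nai25} and Lemmas~\ref{lem:elem1}--\ref{lem:elem2} with $\|A\|=1/N$ and the a priori bounds $\E[(u^\top X_{m+1}^{1,\ctr})^2]\le 2/\alpha$ and $\E[(\tfrac1N\sum_k u^\top X_{m+1}^{k,\ctr})^2]\le 2/(\alpha N)$. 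Your identification of the correct test matrices (block-diagonal $\tfrac1N\diag(uu^\top,\dots,uu^\top)$ for the second moment, $\tfrac1{N^2}(\mathbf 1_N\mathbf 1_N^\top)\otimes uu^\top$ for the squared mean) is in fact cleaner than the display in the paper's Step~1, which appears to contain a typo.
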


We again consider centred samples $X_{m+1}^{k,\ctr}:=X_{m+1}^{k}-\E[X_{m+1}^{k}]$ for $k=1,\ldots,N$; note that $\tilde{\Sigma}_{N}$ can be rewritten as
\begin{equation*}
    \tilde{\Sigma}_{N} = \frac{1}{N}\sum_{k=1}^{N}X_{m+1}^{k,\ctr}(X_{m+1}^{k,\ctr})^{\top} - \left(\frac{1}{N}\sum_{k=1}^{N}X_{m+1}^{k,\ctr}\right)\left(\frac{1}{N}\sum_{k=1}^{N}X_{m+1}^{k,\ctr}\right)^{\top}.
\end{equation*}

\begin{proof}
    We first remark some facts used in this proof.
    Lemma \ref{lem:vem19} combined with the fact that $\delta_{x^{\ast}}$ satisfies an LSI($2/\alpha$) yields that the distribution of each $X_{m+1}^{k}$ satisfies an LSI($2/\alpha$).
    By tensorization \citep{bakry2014analysis}, the joint distribution $(X_{m+1}^{1},\ldots,X_{m+1}^{N})$ (as well as its translation $(X_{m+1}^{1,\ctr},\ldots,X_{m+1}^{N,\ctr})$) satisfies an LSI($2/\alpha$).

    (Step 1: Concentration for the sample mean)
    Using Proposition \ref{prop:lsi} and Lemma \ref{lem:nai25} with $A$ such that 
    \begin{equation*}
        A = \frac{1}{N^{2}}\left[\begin{matrix}
            uu^{\top} & 0 & \cdots & 0\\
            0 & uu^{\top} & \cdots & 0\\
            \vdots & \vdots & \ddots & \vdots\\
            0 & 0 & \cdots & uu^{\top}
        \end{matrix}\right]
    \end{equation*}
    (note that $\|A\|=1/N$), for any $u\in\bbS^{d-1}$ and $\lambda\le \alpha N/8$, we have
    \begin{equation*}
        \E\left[\exp\left(\lambda\left(\frac{1}{N}\sum_{k=1}^{N}u^{\top}X_{m+1}^{k,\ctr}\right)^{2}-\E\left[\left(\frac{1}{N}\sum_{k=1}^{N}u^{\top}X_{m+1}^{k,\ctr}\right)^{2}\right]\right)\right]\le \exp\left(\frac{8\lambda^{2}}{\alpha N}\cdot\frac{2}{\alpha N}\right),
    \end{equation*}
    where it follows from Lemma \ref{lem:pi} and the resulting bound such that
    \begin{equation*}
        \E\left[\left(\frac{1}{N}\sum_{k=1}^{N}u^{\top}X_{m+1}^{k,\ctr}\right)^{2}\right]\le \frac{2}{\alpha N}.
    \end{equation*}
    Then, using the net argument \citep{vershynin2018high}, for some $1/4$-net $\calN\subset\bbS^{d-1}$ with $|\calN|\le 9^{d}$, for any $\lambda\in[0,\alpha^{2}N/8]$ and $t\ge0$,
    \begin{align*}
        &\Pr\left(\left\|\left(\frac{1}{N}\sum_{k=1}^{N}X_{m+1}^{k,\ctr}\right)\left(\frac{1}{N}\sum_{k=1}^{N}X_{m+1}^{k,\ctr}\right)^{\top}-\E\left[\left(\frac{1}{N}\sum_{k=1}^{N}X_{m+1}^{k,\ctr}\right)\left(\frac{1}{N}\sum_{k=1}^{N}X_{m+1}^{k,\ctr}\right)^{\top}\right]\right\|\ge t\right)\\
        &\le \sum_{u\in\calN}\Pr\left(\left|\left(\frac{1}{N}\sum_{k=1}^{N}u^{\top}X_{m+1}^{k,\ctr}\right)^{2}-\E\left[\left(\frac{1}{N}\sum_{k=1}^{N}u^{\top}X_{m+1}^{k,\ctr}\right)^{2}\right]\right|\ge \frac{t}{2}\right)\\
        &\le 2\cdot 9^{d}\exp\left(\frac{16\lambda^{2}}{\alpha^{2} N^{2}}-\frac{\lambda t}{2}\right).
    \end{align*}
    Lemma \ref{lem:elem1} (with $a=16/(\alpha^{2}N^{2})$, $b=t/2$, $c=\alpha N/8$) yields that
    \begin{align*}
        &\Pr\left(\left\|\left(\frac{1}{N}\sum_{k=1}^{N}X_{m+1}^{k,\ctr}\right)\left(\frac{1}{N}\sum_{k=1}^{N}X_{m+1}^{k,\ctr}\right)^{\top}-\E\left[\left(\frac{1}{N}\sum_{k=1}^{N}X_{m+1}^{k,\ctr}\right)\left(\frac{1}{N}\sum_{k=1}^{N}X_{m+1}^{k,\ctr}\right)^{\top}\right]\right\|\ge t\right)\\
        &\le 2\exp\left(d\log 9-\left(\frac{\alpha^{2}N^{2}}{256}t^{2}\right)\wedge \left(\frac{\alpha N}{32}t\right)\right).
    \end{align*}
    Lemma \ref{lem:elem2} gives that for any $\delta\in(0,1)$, with probability at least $1-2\delta$,
    \begin{align*}
        &\left\|\left(\frac{1}{N}\sum_{k=1}^{N}X_{m+1}^{k,\ctr}\right)\left(\frac{1}{N}\sum_{k=1}^{N}X_{m+1}^{k,\ctr}\right)^{\top}-\E\left[\left(\frac{1}{N}\sum_{k=1}^{N}X_{m+1}^{k,\ctr}\right)\left(\frac{1}{N}\sum_{k=1}^{N}X_{m+1}^{k,\ctr}\right)^{\top}\right]\right\|&\\
        &\le \sqrt{\frac{256(d\log9+\log\delta^{-1})}{\alpha^{2}N^{2}}}\vee \left(\frac{32(d\log9+\log\delta^{-1})}{\alpha N}\right)=\frac{32(d\log9+\log\delta^{-1})}{\alpha N}.
    \end{align*}

    (Step 2: Concentration for the sample second moment matrix) We next consider the concentration for the sample second moment matrix.
    Using Proposition \ref{prop:lsi} and Lemma \ref{lem:nai25}, for any $u\in\bbS^{d-1}$ and $\lambda\le \alpha^{2}N/8$, we have
    \begin{equation*}
        \E\left[\exp\left(\frac{\lambda}{N}\sum_{k=1}^{N}\left((u^{\top}X_{m+1}^{k,\ctr})^{2}-\E[(u^{\top}X_{m+1}^{k,\ctr})^{2}]\right)\right)\right]\le \exp\left(\frac{8\lambda^{2}}{\alpha N}\cdot\frac{2}{\alpha}\right),
    \end{equation*}
    where it follows from Lemma \ref{lem:pi} and the resulting bound such that
    \begin{equation*}
        \frac{1}{N}\sum_{k=1}^{N}\E\left[(u^{\top}X_{m+1}^{k,\ctr})^{2}\right]=\E\left[(u^{\top}X_{m+1}^{1,\ctr})^{2}\right]\le \frac{2}{\alpha}.
    \end{equation*}
    Then, using the net argument \citep{vershynin2018high}, for some $1/4$-net $\calN\subset\bbS^{d-1}$ with $|\calN|\le 9^{d}$, for any $\lambda\in[0,\alpha N/8]$ and $t\ge0$,
    \begin{align*}
        &\Pr\left(\left\|\frac{1}{N}\sum_{k=1}^{N}X_{m+1}^{k,\ctr}(X_{m+1}^{k,\ctr})^{\top}-\E\left[\frac{1}{N}\sum_{k=1}^{N}X_{m+1}^{k,\ctr}(X_{m+1}^{k,\ctr})^{\top}\right]\right\|\ge t\right)\\
        &\le \sum_{u\in\calN}\Pr\left(\left|\frac{1}{N}\sum_{k=1}^{N}(u^{\top}X_{m+1}^{k,\ctr})^{2}-\E\left[\frac{1}{N}\sum_{k=1}^{N}(u^{\top}X_{m+1}^{k,\ctr})^{2}\right]\right|\ge \frac{t}{2}\right)\\
        &\le 2\cdot 9^{d}\exp\left(\frac{16\lambda^{2}}{\alpha^{2} N}-\frac{\lambda t}{2}\right).
    \end{align*}
    Lemma \ref{lem:elem1} (with $a=16/(\alpha^{2}N)$, $b=t/2$, $c=\alpha N/8$) yields that
    \begin{align*}
        &\Pr\left(\left\|\left(\frac{1}{N}\sum_{k=1}^{N}X_{m+1}^{k,\ctr}\right)\left(\frac{1}{N}\sum_{k=1}^{N}X_{m+1}^{k,\ctr}\right)^{\top}-\E\left[\left(\frac{1}{N}\sum_{k=1}^{N}X_{m+1}^{k,\ctr}\right)\left(\frac{1}{N}\sum_{k=1}^{N}X_{m+1}^{k,\ctr}\right)^{\top}\right]\right\|\ge t\right)\\
        &\le 2\exp\left(d\log 9-\left(\frac{\alpha^{2}N}{256}t^{2}\right)\wedge \left(\frac{\alpha N}{32}t\right)\right).
    \end{align*}
    Lemma \ref{lem:elem2} gives that for any $\delta\in(0,1)$, with probability at least $1-2\delta$,
    \begin{align*}
        &\left\|\left(\frac{1}{N}\sum_{k=1}^{N}X_{m+1}^{k,\ctr}\right)\left(\frac{1}{N}\sum_{k=1}^{N}X_{m+1}^{k,\ctr}\right)^{\top}-\E\left[\left(\frac{1}{N}\sum_{k=1}^{N}X_{m+1}^{k,\ctr}\right)\left(\frac{1}{N}\sum_{k=1}^{N}X_{m+1}^{k,\ctr}\right)^{\top}\right]\right\|&\\
        &\le \sqrt{\frac{256(d\log9+\log\delta^{-1})}{\alpha^{2}N}}\vee \left(\frac{32(d\log9+\log\delta^{-1})}{\alpha N}\right)\\
        &=\frac{1}{\alpha}\sqrt{\frac{256(d\log9+\log\delta^{-1})}{N}}\vee \left(\frac{32(d\log9+\log\delta^{-1})}{N}\right).
    \end{align*}

    (Step 3: Conclusion) Combining the results in Steps 1 and 2, we obtain 
    \begin{align*}
        \left\|\tilde{\Sigma}_{N}-\E\left[\tilde{\Sigma}_{N}\right]\right\|
        &\le \frac{2}{\alpha}\sqrt{\frac{64(9d+4\log\delta^{-1})}{N}}\vee \left(\frac{8(d\log9+\log\delta^{-1})}{N}\right)\\
        &\le \frac{16}{\alpha}\left(\sqrt{\frac{9d+4\log\delta^{-1}}{N}}\vee \frac{9d+4\log\delta^{-1}}{N}\right),
    \end{align*}
    which is the desired conclusion.
\end{proof}

We give the proof of Proposition \ref{prop:comp:ep}.
We introduce the notation for the sample mean $\tilde{X}_{N} = \frac{1}{N}\sum_{k=1}^{N}X_{m+1}^{k}$.
\begin{proof}[Proof of Proposition \ref{prop:comp:ep}]
    We again employ the bias--variance decomposition \eqref{eq:bvd} and estimate on $\|\Cov(\pi_{\eta})-\Cov(\pi)\|$ in the proof of Proposition \ref{prop:comp:ula}. 
    Without loss of generality, let $x^{\ast}=0\in\R^{d}$.
Without loss of generality, let $x^{\ast}=0\in\R^{d}$.
By Lemma \ref{lem:vem19} and tensorization \citep{bakry2014analysis}, the joint distribution $(X_{m+1}^{1},\ldots,X_{m+1}^{N})$ satisfies an LSI($2/\alpha$).
Then, the argument similar to that in the proof of Proposition \ref{prop:comp:ula} yields
\begin{align*}
    \left\|\E\left[\hat{\Sigma}_{N}\right]-\Cov(\pi_{\eta})\right\|
    &\le \left\|\E\left[\frac{1}{N}\sum_{k=1}^{N}X_{m+1}^{k}(X_{m+1}^{k})^{\top}\right]-\E_{\pi_{\eta}}\left[XX^{\top}\right]\right\|\\
    &\quad+\left\|\E\left[\left(\tilde{X}_{N}\right)\left(\tilde{X}_{N}\right)^{\top}\right]-\E_{\pi_{\eta}}\left[X\right]\E_{\pi_{\eta}}\left[X\right]^{\top}\right\|\\
    &\le \left\|\E\left[X_{m+1}^{1}(X_{m+1}^{1})^{\top}\right]-\E_{\pi_{\eta}}\left[XX^{\top}\right]\right\|\\
    &\quad+\left\|\E\left[\tilde{X}_{N}\left(\tilde{X}_{N}\right)^{\top}\right]-\E\left[\tilde{X}_{N}\right]\E\left[\tilde{X}_{N}\right]^{\top}\right\|\\
    &\quad+\left\|\E\left[\tilde{X}_{N}\right]\E\left[\tilde{X}_{N}\right]^{\top}-\E_{\pi_{\eta}}\left[X\right]\E_{\pi_{\eta}}\left[X\right]^{\top}\right\|\\
    &\le \frac{1}{N}\sum_{k=1}^{N}\sqrt{(1-\alpha\eta)^{m+1}2d/\alpha}\left(2\sqrt{2d/\alpha}\right)+\frac{2}{\alpha N}\\
    &\quad+\left\|\E\left[X_{m+1}^{1}\right]\E\left[X_{m+1}^{1}\right]^{\top}-\E_{\pi_{\eta}}\left[X\right]\E_{\pi_{\eta}}\left[X\right]^{\top}\right\|\\
    &\le 2\sqrt{(1-\alpha\eta)^{m+1}2d/\alpha}\left(2\sqrt{2d/\alpha}\right)+\frac{2}{\alpha N}\\
    &\le \frac{8d}{\alpha}(1-\alpha\eta)^{(m+1)/2}+\frac{2}{\alpha N}\le \frac{\epsilon}{3},
\end{align*}
where we repeatedly used the assumption that $X_{m+1}^{k}$ has the same distribution as $X_{m+1}^{1}$ for all $k=1,\ldots,N$.
We now consider the variance term.
By the assumption on $N$, $(9d+4\log(4/\delta)^{-1})/N\le 1$. 
Therefore, Proposition \ref{prop:concentration:ep} yields that with probability $1-\delta$,
\begin{align*}
        \left\|\tilde{\Sigma}_{N}-\E\left[\tilde{\Sigma}_{N}\right]\right\|\le \frac{16}{\alpha}\sqrt{\frac{9d+4\log(4/\delta)^{-1}}{N}}\le \frac{\epsilon}{3}.
\end{align*}
This is the desired conclusion.
\end{proof}

\bibliographystyle{apalike}
\bibliography{bibliography}

\end{document}